\documentclass[12pt]{amsart}
\usepackage{amsmath, amssymb, amsthm}
\usepackage{paralist, xcolor, tikz, hyperref}

\usepackage[margin=1in,letterpaper,portrait]{geometry}

\usepackage{caption}
\usepackage{enumitem}
\usepackage{comment}
\usepackage{tabularx}
\usepackage{ytableau}

\newcommand{\nn}{n^2}

\newcommand{\DT}{\mathrm{DT}}
\newcommand{\ms}{S}

\newcommand{\qbin}[2]{\left[ \begin{array}{c} #1 \\ #2 \end{array}\right]}
\newcommand{\maj}{\mathrm{maj}}

\newcommand{\Des}{\mathrm{Des}}
\renewcommand{\mod}{\hspace{1 mm}\mathrm{mod} \hspace{1 mm}}
\newtheorem{conjecture}{Conjecture}[section]
\newtheorem{theorem}[conjecture]{Theorem}
\newtheorem{corollary}[conjecture]{Corollary}
\newtheorem{lemma}[conjecture]{Lemma}

\theoremstyle{definition}

\newtheorem{example}[conjecture]{Example}

\newtheorem{definition}[conjecture]{Definition}
\newtheorem*{question*}{Question}

\usepackage{tkz-graph}
\usetikzlibrary[positioning, patterns] 
\usetikzlibrary{arrows}
\usetikzlibrary{shapes.geometric, calc, math}
\usepackage{tikz, tikz-cd} 

\tikzstyle{bsq}=[rectangle, draw, thick, minimum width=1cm, minimum height=1cm] 
\tikzstyle{bver}=[rectangle, draw, thick, minimum width=1cm, minimum height=2cm]
\tikzstyle{bhor}=[rectangle, draw, thick, minimum width=2cm, minimum height=1cm]

\tikzstyle{bverg}=[rectangle, draw, thick, minimum width=1.15cm, minimum height=2.3cm]
\tikzstyle{bhorg}=[rectangle, draw, thick, minimum width=2.3cm, minimum height=1.15cm]

\newdimen\squareheight \squareheight=15pt
\newdimen\squarewidth  \squarewidth=30pt 
\newdimen\thickness    \thickness=0.4pt

\def\square#1{
  \hbox{
    \vrule width \thickness
    \vbox to \squareheight{
      \hrule height \thickness
      \vss
      \hbox to \squarewidth{\hss #1\hss}
      \vss
      \hrule height \thickness
    }
    \unskip\vrule width \thickness
  }
  \kern-\thickness
}

\def\vsquare#1{\vbox{\square{$#1$}}\kern-\thickness}

\title{Cyclic sieving for a class of rectangular domino tableaux}

\author[Laura Colmenarejo]{Laura Colmenarejo$^\dagger$}
\address{Department of Mathematics, North Carolina State University, Raleigh, NC, USA}
\thanks{$^\dagger$Research partially supported by the Simons Foundation.}
\email{lcolmen@ncsu.edu}

\author[Bridget Eileen Tenner]{Bridget Eileen Tenner$^\ddagger$}
\address{Department of Mathematical Sciences, DePaul University, Chicago, IL, USA}
\email{bridget@math.depaul.edu}
\thanks{$^\ddagger$Research partially supported by NSF Grant DMS-2054436 and by a University Research Council Competitive Research Leave from DePaul University.}

\author[Camryn E. Thompson]{Camryn E. Thompson}
\address{Department of Mathematics, North Carolina State University, Raleigh, NC, USA}
\email{cgrey@ncsu.edu}

\begin{document}

\begin{abstract}
The cyclic sieving phenomenon (CSP) provides valuable data about symmetry classes of cyclic actions, and has applications to representation theory. 
In this paper, we enumerate domino tableaux of shape $2\times n$, and use this result to prove a new CSP on these objects. We then enumerate the rectangular domino tableaux of any dimensions, 
and conjecture a more general CSP on rectangular domino tableaux. As a consequence of the enumerative results, we obtain several identities involving Fibonacci and Catalan numbers.
\end{abstract}

\maketitle

\section{Introduction}  
In a seminal 2004 paper, Reiner, Stanton, and White~\cite{OGCSP} introduced what they called the \emph{cyclic sieving phenomenon (CSP)}. This was a recognition that there are some sets for which there is a substantial relationship between a natural collection of symmetries of the set and a natural generating function for the set. In particular, given a cyclic action on a finite set $X$, one might seek to determine the sizes of the fixed point sets under this action, which gives data that fully classifies the cyclic action. The CSP identifies a polynomial that produces the same data when evaluated at appropriate roots of unity. Although this scenario might initially seem far-fetched, it turns out to underlie 
a substantial number of combinatorial objects, and it has been studied widely in the past two decades.

The cyclic sieving phenomenon is a generalization of Stembridge's $q=-1$ phenomenon, which Stembridge introduced in his 1994 paper when he unified the proofs for enumerating the symmetry classes of plane partitions by evaluating a $q$-analogue at $q=-1$~\cite{q-1}. The cyclic sieving phenomenon extends this idea by evaluating a polynomial at all appropriate roots of unity, and, in turn, fully classifying the symmetry classes of a cyclic action on a finite set. The CSP requires three components: a finite set $X$, a cyclic group $C=\langle g \rangle$ of order $n$ that acts on $X$, and a polynomial $f(q)$ with coefficients in $\mathbb{N}$. 
We say that the triple $(X, C, f(q))$ exhibits the \emph{cyclic sieving phenomenon} if, for all $k$, 
\begin{equation}\label{CSP def}
   \#\{x\in X\mid g^k(x)=x\}= f(e^{2\pi ik/n}). 
\end{equation} 
In other words, the triple exhibits the CSP if, for any $g^k\in C$, the number of elements in $X$ fixed by $g^k$ under the cyclic action is equal to the evaluation of $f(q)$ at the $k$th $n$th-root of unity. 

To gain more insight into this equality, notice that evaluating the polynomial $f(q)$ at $q=1$ recovers $\#X$, because every element in $X$ is fixed by $g^n$. Furthermore, when $\#C=2$, the CSP specializes to the $q=-1$ phenomenon~\cite{OGCSP}. It is common to consider the polynomial $f(q)$ to be a generating function counting the elements of $X$ according to some statistic, and to consider an action of the cyclic group on $X$ described in terms of that same statistic.

The CSP has been studied with great interest since it was defined, being exhibited in various settings, each with their own applications. For instance, a CSP has been shown for important classes of tableaux~\cite{RTCSP}, dissections of a polygon~\cite{dissect}, Kac--Moody algebras~\cite{KMA}, finite Grassmannians and flag varieties~\cite{GFV}, parking functions~\cite{parking}, and rooted plane trees~\cite{rpt}, to name a few.

In this paper, we establish a new instance of the cyclic sieving phenomenon on the set of domino tableaux of shape $\lambda=(n,n)$, denoted $\DT(n^2)$. In Section~\ref{sec:Background}, we outline the relevant background of domino tableaux and state some examples of the CSP. In Section~\ref{count dtn2}, we consider $\DT(\nn)$ and prove the enumeration for this set. With an initial formula for $\#\DT(\nn)$, we describe an alternative counting method in Section~\ref{cat identity} that will yield new identities involving Catalan and Fibonacci numbers. In Section~\ref{all CSP}, we determine the cyclic action and polynomial needed to compose a CSP triple, and subsequently state and prove our new instance of the CSP. We broaden our scope to the set of all rectangular domino tableaux, $\DT(n^m)$, in Section~\ref{count rectangles}, where we give additional enumeration results. Finally, we conclude in Section~\ref{conclusion} by conjecturing a new CSP for the set $\DT(n^m)$, and pose other relevant open problems.

\section{Background}\label{sec:Background}

\subsection{Domino tableaux}\label{subsec: domino}

A \emph{partition} $\lambda=(\lambda_1,\ldots, \lambda_\ell)$ is a weakly decreasing sequence of positive integers. We refer to $\lambda_i$ as a \emph{part} of the partition and write $\lambda \vdash n$ or $|\lambda|=n$ if $\sum_i \lambda_i=n$. The \emph{Ferrers diagram} of $\lambda$ is the arrangement of $\lambda_i$ boxes in row $i$. We 
use English notation, which means that the rows are left justified, with row 1 at the top. 

\begin{definition}
Given a partition $\lambda$, a \emph{domino tiling of shape $\lambda$} is a tiling of the Young diagram of $\lambda$ by $2 \times 1$ or $1 \times 2$ ``dominoes.''  
\end{definition}

Note that if $\lambda$ is a partition of an odd integer, then there are no domino tilings of shape $\lambda$. Thus, we only define domino tableaux for shapes that are partitions of even integers.

\begin{definition}
    A \emph{domino tableau of shape $\lambda$} is a domino tiling of the Young diagram of $\lambda \vdash 2n$ in which the $n$ dominoes are labeled with the distinct integers $[n]:=\{1,2,\ldots,n\}$ so that the domino labels across each row and down each column are strictly increasing. We denote the set of domino tableaux of shape $\lambda$ by $\DT(\lambda)$.  
\end{definition} 

\begin{example} \label{domtab}
Below are domino tableaux of shapes $(5,5,3,3,2)$ and $(9,9)$, respectively, each having 9 dominoes.
    $$
    \begin{matrix} \scalebox{0.5}{
\begin{tikzpicture}[node distance=0 cm,outer sep = 0pt]
        \node[bver] (1) at ( 0,   0) {\bf \Large 1};
        \node[bver] (2) [right = of 1] {\bf \Large 2};
        \node[bhor] (3) at (0.5, -1.5) {\bf \Large 3};         
        \node[bhor] (4) at (2.5, 0.5) {\bf \Large 4};        
        \node[bver] (5) at (0, -3) {\bf \Large 5}; 
        \node[bhor] (6) [below = of 4] {\bf \Large 6};
        \node[bver] (7) [right = of 5] {\bf \Large 7};
        \node[bver] (8) at (2, -2) {\bf \Large 8};
        \node[bver] (9) at (4, 0) {\bf \Large 9};
\end{tikzpicture}  } \end{matrix} \hspace{25 mm} 
\begin{matrix} \scalebox{0.5}{
\begin{tikzpicture}[node distance=0 cm,outer sep = 0pt]
        \node[bver] (1) at ( 0,   0) {\bf \Large 1};
        \node[bver] (2) [right = of 1] {\bf \Large 2};
        \node[bhor] (3) at (2.5, 0.5) {\bf \Large 3};         
        \node[bhor] (4) [below = of 3] {\bf \Large 4};        
        \node[bhor] (5) [right = of 3] {\bf \Large 5}; 
        \node[bhor] (6) [right = of 5] {\bf \Large 6};
        \node[bhor] (7) [below = of 5] {\bf \Large 7};
        \node[bhor] (8) [below = of 6] {\bf \Large 8};
        \node[bver] (9) at (8,0) {\bf \Large 9};
\end{tikzpicture}  } \end{matrix}$$
\end{example}

In this paper, we focus our study on the set of domino tableaux of rectangular shape $m \times n$; that is, of partition shape $(n^m)$. In particular, for domino tableaux of shape $2\times n$, like the second shape of Example~\ref{domtab}, we say that a domino is a \emph{top horizontal domino} if it is horizontal and in row 1, and a \emph{bottom horizontal domino} if it is horizontal and in row 2. Each top horizontal domino has a bottom horizontal domino in alignment below it, as forced by the overall shape $2 \times n$. We call such a pair of aligned horizontal dominoes a \emph{stack}. A domino tableau $D \in \DT(\nn)$ has a \emph{$k$-stack} if there are $k$ consecutive horizontal stacks in $D$ and $k$ is maximal, and a \emph{stacking} is a $k$-stack for some unspecified value of $k$.
For example, the second tableau in Example~\ref{domtab} has one stacking, and it is a 3-stack. 

Given a partition $\lambda$, we define an \emph{increasing Young tableau (increasing YT) of shape $\lambda$} as a filling of the Ferrers diagram of $\lambda$ in English notation, with increasing numbers in the rows (from left to right) and in the columns (from top to bottom). A \emph{standard YT of shape $\lambda$} is an increasing YT where the labels in the filling are exactly $[n]$ where $n = |\lambda|$.

There is a bijection between domino tableaux and pairs of increasing YT. 
Stanton and White originally stated this bijection as a bijection between rim hook tableaux and pairs of increasing Young tableaux~\cite{rimhook}. Carr\'e and Leclerc later simplified the bijection by looking at the set of domino tableaux, a subset of rim hook tableaux, and using the diagonals of these tableaux to describe the algorithm~\cite{DTSYT}. 

We define $\Gamma$ to be the map that sends a domino tableau $D$ of shape $\lambda$ to a pair of increasing YT $(\mu, \nu)$ so that their labeling sets are disjoint and their union is exactly the labeling set of $D$. The map $\Gamma$ is defined using the \emph{2-quotient} of $\lambda$, which we define as follows.

\begin{definition}[{\cite{DTSYT}}]
Let $\ell$ be the number of parts in $\lambda$. Define $L$ to be the $\ell$-tuple whose $i$th element is $\lambda_i+\ell-i$. Next, define $M$ to be the $\ell$-tuple obtained from $L$ by replacing the even numbers from right to left with $0,2,4,\ldots$ and the odd numbers with $1, 3,5,\ldots$ successively. Then $\mu$ is the partition obtained by subtracting from the odd parts of $L$ the corresponding parts of $M$ and dividing by two. The partition $\nu$ is found using the same process on even parts. The ~\emph{2-quotient} of $\lambda$ is the pair $(\mu, \nu)$. 
\end{definition}

\begin{example} Consider $\lambda= (5, 5, 3, 3, 2)$. Then $L= (9, 8, 5, 4, 2)$ and $M=(3, 4, 1, 2, 0)$. Subtracting the odd (even) parts of $L$ by the corresponding parts of $M$ yields 
$$\mu=\frac{1}{2}[(9, 5)-(3, 1)]=(3, 2) \hspace{5 mm} \text{and} \hspace{5 mm} \nu=\frac{1}{2}[(8,4,2)-(4,2,0)]=(2,1,1).$$
\end{example}

To obtain a pair of increasing tableaux from a domino tableau under $\Gamma$, one starts by considering the even-positioned diagonals of the domino tableau. In particular, we track how each domino is intersected by a diagonal and classify them according to the following two types. 
\begin{eqnarray*}
\begin{array}{ccc}
\scalebox{0.75}{
\begin{tikzpicture}
    \draw (0,0) rectangle (1,-2);  
    \draw (1.5,-1) rectangle (3.5,0);
    \draw[dotted, red, very thick] (0,0) -- (1.25,-1.25);
    \draw[dotted, red, very thick] (2.5,0) -- (3.75,-1.25);
    \draw[dotted, white, very thick] (1.15,-2.15) -- (1.25,-2.25);
\end{tikzpicture}} & \qquad\qquad\qquad & 
\scalebox{0.75}{\begin{tikzpicture}
    \draw (0,0) rectangle (1,-2);  
    \draw (1.5,-1) rectangle (3.5,0);
    \draw[dotted, red, very thick] (0,-1) -- (1.25,-2.25);
    \draw[dotted, red, very thick] (1.5,0) -- (2.75,-1.25);
\end{tikzpicture}} \\
\text{Type I} & & \text{Type II}
\end{array}
\end{eqnarray*}

\begin{definition}\label{defn:Gamma}
    Let $\Gamma$ be the map that sends a domino tableau $D$ of shape $\lambda$ to a pair of increasing YT whose shapes are the 2-quotient $(\mu,\nu)$ of $\lambda$ labeled as follows: 
    the increasing YT of shape $\mu$ is obtained by deleting all dominoes of Type II on each diagonal and sliding the remaining dominoes up their diagonals to create an increasing YT, and the increasing YT of shape $\nu$ is obtained by deleting all dominoes of Type I and sliding the remaining dominoes up their diagonals to create an increasing YT.
\end{definition}

\begin{example} Applying $\Gamma$ to the first domino tableau from Example~\ref{domtab} yields the following pair of increasing tableaux.  
\begin{eqnarray*}
\newdimen\squarewidth  \squarewidth=15pt
\begin{tikzpicture}
\node (dt) at (0,0) {
\begin{tikzpicture}
    \draw (0,0) rectangle (0.5,-1);
    \node at (0.25, -0.5) {$\mathbf{1}$};
    \draw (0.5,0) rectangle (1,-1);
    \node at (0.75, -0.5) {$\mathbf{2}$};
    \draw (1,0) rectangle (2,-0.5);
    \node at (1.5, -0.25) {$\mathbf{4}$};
    \draw (1,-0.5) rectangle (2,-1);
    \node at (1.5, -0.75) {$\mathbf{6}$};
    \draw (2,0) rectangle (2.5,-1);
    \node at (2.25, -0.5) {$\mathbf{9}$};
    \draw (0,-1) rectangle (1,-1.5);
    \node at (0.5, -1.25) {$\mathbf{3}$};
    \draw (1,-1) rectangle (1.5,-2);
    \node at (1.25, -1.5) {$\mathbf{8}$};
    \draw (0,-1.5) rectangle (0.5,-2.5);
    \node at (0.25, -2) {$\mathbf{5}$};
    \draw (0.5,-1.5) rectangle (1,-2.5);
    \node at (0.75, -2) {$\mathbf{7}$};
    \draw[dotted, red, very thick] (0,0) -- (2.25,-2.25);
    \draw[dotted, red, very thick] (1,0) -- (2.75,-1.75);
    \draw[dotted, red, very thick] (2,0) -- (2.75,-0.75);
    \draw[dotted, red, very thick] (0,-1) -- (1.75,-2.75);
     \draw[dotted, red, very thick] (0,-2) -- (0.75,-2.75);
\end{tikzpicture}}; 
\node (diag-mu) at (5,2) {
\begin{tikzpicture}
    \draw[dotted, red,  thick] (0,0) -- (1.75,-1.75);
    \draw[dotted, red,  thick] (1,0) -- (2.25,-1.25);
    \draw[dotted, red,  thick] (2,0) -- (2.75,-0.75);
    \draw[dotted, red,  thick] (0,-1) -- (0.75,-1.75);
    \node at (0.25, -0.25) {$\mathbf{1}$};
    \node at (1.25, -0.25) {$\mathbf{6}$};
    \node at (2.25, -0.25) {$\mathbf{9}$};
    \node at (1.25, -1.25) {$\mathbf{8}$};
    \node at (0.25, -1.25) {$\mathbf{7}$};
\end{tikzpicture}};
\node (diag-nu) at (5,-2) {
\begin{tikzpicture}
    \draw[dotted, red, thick] (0,0) -- (1.25,-1.25);
    \draw[dotted, red, thick] (1,0) -- (2.25,-1.25);
    \draw[dotted, red, thick] (0,-1) -- (0.75,-1.75);
    \draw[dotted, red,  thick] (0,-2) -- (0.75,-2.75);
   \node at (0.25, -0.25) {$\mathbf{2}$};
    \node at (1.25, -0.25) {$\mathbf{4}$};
    \node at (0.25, -1.25) {$\mathbf{3}$};
    \node at (0.25, -2.25) {$\mathbf{5}$};
\end{tikzpicture}};
\node (mu) at (8.5,2) {
\begin{tikzpicture}
    \draw (0,0) rectangle (0.5,-0.5);
    \draw (0.5,0) rectangle (1,-0.5);
    \draw (1,0) rectangle (1.5,-0.5);
    \draw (0,-0.5) rectangle (0.5,-1);
    \draw (0.5,-0.5) rectangle (1,-1);
    \node at (0.25, -0.25) {$\mathbf{1}$};
    \node at (0.75, -0.25) {$\mathbf{6}$};
    \node at (1.25, -0.25) {$\mathbf{9}$};
    \node at (0.25, -0.75) {$\mathbf{7}$};
    \node at (0.75, -0.75) {$\mathbf{8}$};
    \node at (2,-0.5) {$=\mu$};
\end{tikzpicture}
};  
\node (nu) at (8.5,-2) {\begin{tikzpicture}
    \draw (0,0) rectangle (0.5,-0.5);
    \draw (0.5,0) rectangle (1,-0.5);
    \draw (0,-0.5) rectangle (0.5,-1);
    \draw (0,-1) rectangle (0.5,-1.5);
    \node at (0.25, -0.25) {$\mathbf{2}$};
    \node at (0.75, -0.25) {$\mathbf{4}$};
    \node at (0.25, -0.75) {$\mathbf{3}$};
    \node at (0.25, -1.25) {$\mathbf{5}$};
    \node at (1.5,-0.75) {$=\nu$};
\end{tikzpicture}};
\draw (dt) edge[->] node[sloped,above] {Type I} (diag-mu);
\draw (dt) edge[->] node[sloped,below] {Type II} (diag-nu);
\draw (diag-mu) edge[->]  (mu);
\draw (diag-nu) edge[->] (nu);
\end{tikzpicture}
\end{eqnarray*}

\end{example}

\begin{theorem}[\cite{DTSYT}]\label{DTSYT} 
The map $\Gamma$ is a bijection.
\end{theorem}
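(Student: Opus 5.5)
The plan is to prove Theorem~\ref{DTSYT} by induction on the number of dominoes, using the standard abacus (Maya diagram) description of the 2-quotient. In this description, $\Gamma$ amounts to peeling off the largest domino.

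The first step is to encode $\lambda$ by the bead positions $L=(\lambda_i+\ell-i)$, split into an even runner and an odd runner. The odd positions, after the renormalization by $M$ and division by two, give $\mu$, and the even positions give $\nu$. The 2-quotient definition says exactly this.

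The key local fact to establish first is the following. Removing a domino from the outer rim of $\lambda$, so that the result is again a partition, is the same as moving one bead in $L$ down by $2$ into an empty slot on the same runner. Such a move changes exactly one of $\mu$, $\nu$, by removing one outer corner box. Conversely, every removable corner of $\mu$ or of $\nu$ arises from a unique removable domino of $\lambda$. I will also check that the runner is determined by the diagonal type. A domino whose relevant even diagonal passes through its upper-left cell (Type I, in the paper's pictures) corresponds to a bead on the odd runner. Type II corresponds to the even runner. This is a parity computation on the content $j-i$ of the cells of the domino, together with the fact that the bead for row $i$ sits at $\lambda_i+\ell-i$.

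Next I will show that the diagonal-sliding description in Definition~\ref{defn:Gamma} agrees with this recursive description. Define $\Gamma'(D)$ recursively: remove the domino labelled $n$, which lies on the rim because labels increase along rows and columns; apply $\Gamma'$ to the smaller tableau; then add a box labelled $n$ at the corresponding corner of $\mu$ or $\nu$. I must verify that the box added this way is precisely where the sliding places $n$. Along each even diagonal, the dominoes of a given type occur in increasing label order. Sliding them up the diagonal places the $k$th one at the $k$th cell of the corresponding diagonal of $\mu$ (or $\nu$). So the position of $n$ in the quotient depends only on how many same-type dominoes precede it on its diagonal. That count is a function of the shape, matching the bead move. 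I expect this identification to be the main obstacle, because it requires careful bookkeeping of how diagonals of $\lambda$ map to diagonals (contents) of $\mu$ and $\nu$, including the normalization by $M$.

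Given these steps, bijectivity follows by induction. For injectivity, $\Gamma'(D)$ determines which quotient contains $n$ and at which corner, hence the unique removable domino of $\lambda$ carrying $n$, and the rest is determined by the induction hypothesis. For surjectivity, given a pair $(\mu,\nu)$ of increasing tableaux with disjoint label sets whose union is $[n]$, the largest label sits in a removable corner of one of them. That corner corresponds to a removable domino of $\lambda$, so we can remove it, recurse, and reattach the domino labelled $n$. The resulting labelling increases along rows and columns since $n$ is maximal. Finally, both sides are well defined on all of $\lambda$, since the 2-quotient of the smaller shape is obtained from $(\mu,\nu)$ by deleting that box. The base case $n=0$, the empty shape with empty quotient, is trivial.
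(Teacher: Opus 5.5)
The paper gives no proof of this theorem; it is quoted from Carr\'e--Leclerc, so there is no in-text argument to compare with. Your architecture is the standard peeling argument behind the Stanton--White correspondence, and it is sound in outline: remove the domino labelled $n$, identify rim dominoes with bead moves by $2$ and hence with corners of one quotient component, then check that sliding agrees with recursive insertion. Your assertion that the number of same-type dominoes on a diagonal is ``a function of the shape'' is not automatic. It has to come out of the same induction, as the length of the corresponding diagonal of the quotient of the smaller shape. The real problem is the type/runner dictionary, which you only promise to ``check''. As stated it is false, and with the paper's normalization your inductive step fails.

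First, Type I is not ``the even diagonal meets the upper-left cell''. In the paper's pictures the diagonal meets the top cell of a vertical domino and the \emph{right} cell of a horizontal one. So Type I means the domino covers contents $\{2t-1,2t\}$, and Type II means $\{2t,2t+1\}$. If removing a rim domino moves the bead $p=\lambda_i+\ell-i$ to $p-2$, then for either orientation the domino has contents $p-\ell-1$ and $p-\ell$. Hence Type I corresponds to $p\equiv\ell\pmod 2$, which is the odd runner only when $\ell$ is odd. For $\lambda=(3,3)$ with $L=(4,3)$, the vertical domino in column $3$ is Type I, yet removing it moves bead $4$ to $2$ on the even runner. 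The paper's own $(5^6)$ example shows the same swap: the Type I dominoes fill a $3\times 3$ rectangle, while the odd parts of $L=(10,9,\dots,5)$ give $(2,2,2)$.

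Second, $\ell$ is not constant along your induction. Removing a horizontal domino that is the entire last row lowers $\ell$ by one, which swaps the runners. For example, the paper's quotient of $(3,3,2)$ is $((2),(1,1))$. Deleting the box for the Type II domino in row $3$ gives $((2),(1))$, but the paper's quotient of $(3,3)$ is $((1),(2))$. So your claim that ``the 2-quotient of the smaller shape is obtained from $(\mu,\nu)$ by deleting that box'' is false there.

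The repair is to pad $\lambda$ with zero parts to a fixed odd number $N$ of beads and use $\lambda_i+N-i$ throughout. Domino moves preserve the number of beads on each runner, so the normalization $M$ is the same for every shape in the induction. Type I then always corresponds to the odd runner. A domino covering contents $\{2t-1,2t\}$ (resp.\ $\{2t,2t+1\}$) removes the box of content $t$ from $\mu$ (resp.\ $\nu$), which is exactly what the diagonal sliding needs. Equivalently, the theorem as stated must be read with $\mu$ and $\nu$ interchanged when $\ell$ is even. With this correction the rest of your argument goes through.
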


We define two statistics on domino tableaux by extending the notion of descent and major index from standard Young tableaux. 

\begin{definition}\label{defn:descent}
A domino tableau $D$ has a \emph{descent} at position $i$ if the northeast-most cell of the domino labeled $i+1$ is in a row that is below the northeast-most cell of the domino labeled $i$. The \emph{descent set} of $D$, denoted $\Des(D)$, 
is the set of all descents of $D$.  
The \emph{major index} of a domino tableau is defined in the usual way: 
\begin{equation*}
        \maj(D)= \displaystyle \sum_{i \in \Des(D)} i .
\end{equation*}
\end{definition}

\begin{example}
For $D= \begin{matrix} \scalebox{0.5}{
\begin{tikzpicture}[node distance=0 cm,outer sep = 0pt]
        \node[bhor] (1) at ( 0,   0) {\bf \Large 1};
        \node[bhor] (2) [below = of 1] {\bf \Large 2};
        \node[bver] (3) at (1.5, -.5) {\bf \Large 3};         
        \node[bhor] (4) at (3, 0) {\bf \Large 4};        
        \node[bhor] (5) [below = of 4] {\bf \Large 5}; 
\end{tikzpicture}  } \end{matrix}$, we have $\Des(D) =\{ 1, 4\}$ and $\maj(D) = 1+4=5$.
\end{example}

\subsection{Examples of cyclic sieving}\label{subsec:CSP}
We present here some known instances of the CSP that are relevant to our study, and we state them in the most convenient form for this paper. 
The first example is a CSP on the set of $k$-element subsets of $[n]$.

\begin{theorem}[{\cite[Theorem 1.1(b)]{OGCSP}}]\label{subset CSP} 
Let $X_n(k)$ be the set of $k$-element subsets of $[n]$ and let $C=\langle g\rangle$ be the cyclic group of order $n$. Consider the action of $C$ on $X_n(k)$ given by 
$g \hspace{0.05cm} \raisebox{0.2ex}{$\scriptscriptstyle\bullet$}\hspace{0.05cm}  x = (x+1)\mod n$.  
Then the triple $(X_n(k), C, f(q))$ exhibits the cyclic sieving phenomenon, where $f(q)=\qbin{n}{k}_q$ is the $q$-analogue of the binomial coefficient. 
\end{theorem}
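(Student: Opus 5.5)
We need to prove the classical CSP for $k$-subsets of $[n]$ under rotation with $f(q)=\qbin{n}{k}_q$. The plan is to compute both sides of \eqref{CSP def} explicitly and match them. Fix $k$-th power $g^j$ and let $d=n/\gcd(n,j)$ be the order of $g^j$, so $\omega=e^{2\pi i j/n}$ is a primitive $d$-th root of unity.

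\textbf{Fixed points.} A subset $x$ with $g^j x = x$ is a union of orbits of $\langle g^j\rangle$ acting on $\mathbb{Z}/n$. There are $n/d$ such orbits, each of size $d$. So fixed subsets exist only when $d\mid k$, and then their number is $\binom{n/d}{k/d}$. Otherwise the count is $0$.

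\textbf{Evaluation of $f$.} Write
\[
\qbin{n}{k}_q=\prod_{i=1}^{k}\frac{1-q^{n-k+i}}{1-q^{i}}.
\]
I will evaluate this at $q=\omega$ by pairing the factor $1-q^{n-k+i}$ with $1-q^i$. Because $d\mid n$, we have $n-k+i\equiv i-k \pmod d$.

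Rather than a delicate limit, I would use the $q$-Lucas-type fact that for a primitive $d$-th root of unity $\omega$, with $n=ad+b$ and $k=cd+e$ where $0\le b,e<d$,
\[
\qbin{n}{k}_{\omega}=\binom{a}{c}\qbin{b}{e}_{\omega}.
\]
This follows from the $q$-binomial theorem:
\[
\prod_{t=0}^{n-1}(1+zq^t)=\sum_k q^{\binom k2}\qbin{n}{k}_q z^k.
\]
At $q=\omega$, each block of $d$ consecutive factors gives $\prod_{t=0}^{d-1}(1+z\omega^t)=1-(-z)^d$. The left side therefore becomes $(1-(-z)^d)^{a}\prod_{t=0}^{b-1}(1+z\omega^t)$.

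Since $d\mid n$ here, $b=0$. Comparing coefficients of $z^k$, the coefficient vanishes unless $d\mid k$. When $d\mid k$, it equals $\binom{n/d}{k/d}(-1)^{(d+1)k/d}$.

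\textbf{Matching the two sides.} It remains to check that $\omega^{\binom k2}=(-1)^{(d+1)k/d}$ when $d\mid k$, so the sign cancels. Write $k=cd$, so $\binom k2=cd(cd-1)/2$.

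If $d$ is odd, then $\omega^{\binom k2}=1$ because $d$ divides $cd(cd-1)/2$. Also $(d+1)c$ is even, so the sign is $+1$ and both sides agree.

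If $d$ is even, then $\omega^{\binom k2}=\omega^{(d/2)c(cd-1)}=(-1)^{c(cd-1)}=(-1)^c$. Here $(d+1)c\equiv c \pmod 2$, so the sign is also $(-1)^c$ and both sides agree.

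\textbf{Main obstacle.} The main obstacle is this sign and root-of-unity bookkeeping, which is routine but must be done carefully. Once it is settled, the evaluation of $f$ equals the fixed-point count in every case, which establishes the CSP.
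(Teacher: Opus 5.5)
Your proof is correct and complete. The paper gives no proof of this statement; it is quoted from Reiner--Stanton--White. The natural comparison is therefore with how its two ingredients appear in the paper's CSP proof for $\DT(n^2)$, which is this theorem with $k=\lfloor n/2\rfloor$ transported through $\Phi$.

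Your fixed-point count is the standard one: a $g^j$-stable subset is a union of the $n/d$ orbits of size $d$, giving $\binom{n/d}{k/d}$ or $0$. It is more direct than the residue-class and Lyndon-word argument the paper uses in the $\DT(n^2)$ case.

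For the polynomial side, the paper simply invokes the evaluation $\qbin{n}{k}_{\omega}=\binom{n/d}{k/d}$ (zero unless $d\mid k$) from \cite[Proposition 4.2(ii)]{OGCSP}. The usual derivation of that fact is the one you sketched and set aside. It pairs factors of $\prod_{i=1}^k(1-q^{n-k+i})/(1-q^i)$ and uses $\lim_{q\to\omega}(1-q^a)/(1-q^b)=a/b$ when $a\equiv b\equiv 0 \pmod{d}$, and $=1$ when $a\equiv b\not\equiv 0\pmod{d}$.

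Your route through $\prod_{t=0}^{n-1}(1+z\omega^t)=(1-(-z)^d)^{n/d}$ avoids limits. It also makes the vanishing for $d\nmid k$ immediate by coefficient extraction, using only $\omega^{\binom k2}\neq 0$. The cost is the parity check on $\omega^{\binom k2}$, which you carry out correctly for both odd and even $d$. The limit route has no sign to track, but it requires counting exactly which numerator and denominator factors vanish at $\omega$.

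Two cosmetic points. First, the general $q$-Lucas identity with $b\neq 0$ is asserted but never needed, since you prove and use only the case $b=0$. Second, ``fix $k$-th power $g^j$'' clashes with $k$ as the subset size.
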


Our second example comes from a CSP on finite Coxeter groups. We present here the version for the set $W_{n,k}$ of binary words with length $n$ and $k$ zeros, for which the cyclic shift action is defined by taking the first letter in the word and moving it to the end of the word. 

\begin{theorem}[{\cite[Theorem 1.6]{OGCSP}}]
    Let $C$ be the cyclic group of order $n$ acting on $W_{n,k}$ by the cyclic shift. Then the triple $(W_{n,k}, C, f(q))$ exhibits the cyclic sieving phenomenon, where $f(q)= \displaystyle \sum_{w\in W_{n,k}} q^{\maj(w)}$.
\end{theorem}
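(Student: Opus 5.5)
The statement to prove is the cyclic sieving phenomenon for $(W_{n,k}, C, f(q))$ with $f(q)=\sum_{w\in W_{n,k}} q^{\maj(w)}$. I will reduce it to Theorem~\ref{subset CSP}.

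First, I will show that $f(q)=\qbin{n}{k}_q$. This is MacMahon's classical result. I would prove it by induction on $n$, conditioning on the last letter of $w$. I would rather not do this by recursing on the first letter, since deleting the first letter shifts every descent position down by one and makes the bookkeeping messier.

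If $w$ ends in a letter that creates no descent at position $n-1$, deleting that letter leaves $\maj$ unchanged. If the last two letters form a descent, deleting the last letter removes exactly the term $n-1$. A cleaner route, which I will use, is the standard insertion argument: inserting letters one at a time yields the $q$-Pascal recurrence
\[
\qbin{n}{k}_q=\qbin{n-1}{k-1}_q+q^{k}\qbin{n-1}{k}_q,
\]
or its mirror version, applied to the $\maj$ generating function. Alternatively, I can simply cite MacMahon's equidistribution of $\maj$ and $\mathrm{inv}$ together with $\sum q^{\mathrm{inv}}=\qbin{n}{k}_q$.

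Second, I will biject $W_{n,k}$ with $k$-element subsets of $[n]$. The map sends $w$ to the set of positions of its zeros. Under this map, the cyclic shift of the word (first letter moved to the end) corresponds to the map $x\mapsto x-1 \mod n$ on positions. This is the inverse generator of the rotation in Theorem~\ref{subset CSP}.

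Third, I will transfer the fixed-point counts. Since $g^{-1}$ generates the same cyclic group, the number of fixed points of $g^{-k}$ equals the number of fixed points of $g^{k}$. Moreover, $f(q)$ has real coefficients, so $f(\omega^{-k})=\overline{f(\omega^k)}$, and $f(\omega^k)$ is itself a fixed-point count and hence real. So the direction of rotation does not matter. Combined with the first step, Theorem~\ref{subset CSP} gives the fixed-point counts directly.

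The main obstacle is the first step, namely the identity $\sum_{w\in W_{n,k}}q^{\maj(w)}=\qbin{n}{k}_q$. I would handle it by citing MacMahon, with the $q$-Pascal recurrence as the self-contained fallback.
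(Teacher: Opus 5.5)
Your argument is correct, but it is not how the paper handles this statement. The paper gives no proof: it quotes the statement as the binary-word case of Reiner--Stanton--White's cyclic sieving result for finite Coxeter groups. There the set is the parabolic quotient $S_n/(S_k\times S_{n-k})$, i.e.\ words of fixed content, acted on by the long cycle. That result rests on the reflection-group machinery of their paper, but it covers words of arbitrary content at once. You instead reduce to Theorem~\ref{subset CSP}, using MacMahon's identity $\sum_{w\in W_{n,k}}q^{\maj(w)}=\qbin{n}{k}_q$ and the zero-position bijection. Under that bijection, moving the first letter to the end is indeed $x\mapsto x-1 \bmod n$. Your route is elementary but specific to two letters, and it uses exactly the ingredients the paper itself assembles in Section~\ref{all CSP} for $\DT(\nn)$.

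Two small remarks. First, the clean reason the reversed rotation is harmless is that $hx=x$ if and only if $h^{-1}x=x$, so $g^{k}$ and $g^{-k}$ have the same fixed points; the complex-conjugation observation is then unnecessary. Second, Step 1 genuinely rests on the citation, because your fallback is not yet an argument. Deleting an end letter shifts $\maj$ by an amount that depends on the neighboring letter:
\begin{itemize}
\item $\maj(w'0)$ is $\maj(w')+n-1$ or $\maj(w')$ according as $w'$ ends in $1$ or $0$;
\item deleting a first letter shifts $\maj$ by $\mathrm{des}(w')$ or $\mathrm{des}(w')+1$.
\end{itemize}
So neither $q$-Pascal recurrence drops out the way the first one does for $\mathrm{inv}$. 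A self-contained proof would go through Foata's bijection or MacMahon's partition argument.
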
 

The representatives of the orbits in $W_{n,k}$ under this cyclic action are the Lyndon words of length $n$ with $k$ zeros (see Section~\ref{CSP Action} for more details). 

Finally, we present an instance of the CSP on standard Young tableaux of rectangular shape, studied by Rhoades using promotion as the cyclic action. We refer the reader to~\cite{RTCSP} for more details on the promotion action.
\begin{theorem}[{\cite[Theorem 1.3]{RTCSP}}]\label{RTCSP}
Let $\lambda \vdash n$ be a rectangular partition, let $X_n(\lambda)$ be the set of standard Young tableaux of shape $\lambda$, and let $C=\mathbb{Z}_n$ act on $X_n(\lambda)$ by promotion. Then the triple $(X_n(\lambda), C, f(q))$ exhibits the cyclic sieving phenomenon, where $f(q)= \dfrac{[n]!_q}{\prod_{(i,j)\in \lambda} [h_{ij}]_q}$ is the $q$-analogue of the hook length formula.
\end{theorem}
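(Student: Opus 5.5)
The plan is to follow Rhoades' representation-theoretic strategy: interpret the fixed-point counts of promotion as traces of permutation-like matrices on the irreducible $\mathfrak{S}_n$-module $S^\lambda$, and then evaluate those traces using Springer's theory of regular elements. Write $c=(1\,2\,\cdots\,n)\in\mathfrak{S}_n$ for the long cycle. The first reduction is standard. The $q$-hook length formula $f(q)=[n]!_q/\prod[h_{ij}]_q$ equals, up to a power of $q$, the fake degree $\sum_{T}q^{\maj(T)}$ of $S^\lambda$, that is, the graded multiplicity of $S^\lambda$ in the coinvariant algebra. Since $c^k$ is a regular element of $\mathfrak{S}_n$ with eigenvalue $\zeta^k$, $\zeta=e^{2\pi i/n}$, Springer's theorem gives
\begin{equation*}
\chi^\lambda(c^k)=\zeta^{-k\,b}\,f(\zeta^k)
\end{equation*}
for an explicit shift $b$ coming from the normalization of the fake degree. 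So it suffices to prove that $\#\{T:\partial^k T=T\}=\pm\chi^\lambda(c^k)$ with the correct sign, where $\partial$ denotes promotion, and then to reconcile the sign with $\zeta^{-kb}$.

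To realize promotion inside $S^\lambda$, I will use the Kazhdan--Lusztig cellular basis $\{C_T\}$ of $S^\lambda$, which is indexed by standard Young tableaux of shape $\lambda$. The key claim is that when $\lambda$ is rectangular the matrix of $c$ (or of $c^{-1}$, depending on conventions) in this basis is, up to a global sign $\epsilon$, the permutation matrix of promotion: $c\cdot C_T=\epsilon\, C_{\partial T}$. Given this claim, $\chi^\lambda(c^k)=\epsilon^k\#\mathrm{Fix}(\partial^k)$ follows immediately.

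I would prove the claim in two steps.
\begin{enumerate}[label=(\roman*)]
\item Factor $c=w_0\cdot w_0'$, where $w_0$ is the longest element of $\mathfrak{S}_n$ and $w_0'$ is the longest element of the parabolic subgroup $\mathfrak{S}_{n-1}$. Likewise, factor promotion as $\partial=\mathrm{evac}\circ\mathrm{evac}'$, where $\mathrm{evac}'$ is evacuation applied to the entries $1,\dots,n-1$.
\item By a result of Berenstein--Zelevinsky and Stembridge, $w_0$ acts on the KL cellular basis by $C_T\mapsto \pm C_{\mathrm{evac}(T)}$. The same result applies to $w_0'$ on the restriction to $\mathfrak{S}_{n-1}$. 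That restriction is well behaved because the KL cells are compatible with restriction to parabolic subgroups, up to lower-order terms.
\end{enumerate}

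The main obstacle is step (ii) for $w_0'$. Restricting $S^\lambda$ to $\mathfrak{S}_{n-1}$ is not cellular on the nose: the action of $w_0'$ is only upper triangular modulo lower cells. Rectangularity is what rescues the argument. Because a rectangle has a unique removable corner, the restriction is irreducible, namely $S^{\lambda^-}$, and the triangular error terms must vanish. I would try to make this precise via the $\mathfrak{S}_{n-1}$-cell structure of the left cell corresponding to $\lambda$. The last task is bookkeeping: compute the sign $\epsilon$ and verify that $\epsilon^k=\zeta^{kb}$. Concretely, this means checking that $b\cdot$(rectangle data) gives the right parity, which I expect to be a direct calculation of $\maj$ for the superstandard tableau.
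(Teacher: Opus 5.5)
Your outline is correct, and it is essentially Rhoades' original Kazhdan--Lusztig argument; this paper gives no proof of its own and only cites Rhoades, noting that the proof uses KL theory and needs rectangularity. The ingredients match: Springer's theorem for the regular element $c$, the factorizations $c=w_0w_0'$ and $\partial=\mathrm{evac}\circ\mathrm{evac}'$, and the Berenstein--Zelevinsky/Stembridge description of $w_0$ acting on the KL basis by signed evacuation with a sign that is uniform in $T$.

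Step (ii) still has to be carried out. Irreducibility of $\mathrm{Res}\,S^\lambda$ kills the off-diagonal terms only once you know the restricted KL basis carries a filtration whose subquotients are $\mathfrak{S}_{n-1}$-cell modules in their own KL bases. This follows, for example, because for $x$ minimal in its coset $W_Jx$, the element $C_{ux}$ agrees with $C_uT_x$ modulo lower cosets, up to normalization. You also need $P(w)|_{[n-1]}=P(w|_{[n-1]})$ to see that the labels match under $T\mapsto T|_{[n-1]}$.
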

Rhoades proved this and similar theorems using Kazhdan-Lusztig theory, for which the tableaux must have a rectangular shape. 

\section{Enumerating $\DT(\nn)$}\label{count dtn2}
Our initial goal is to determine $\#\DT(\nn)$, the number of domino tableaux of shape $2 \times n$. It is known that the number of domino tilings of $\lambda=(\nn)$ is the $n$th Fibonacci number; however, a given tiling might admit more than one labeling.  

\begin{example} \label{F}
    Below are two domino tableaux with the same underlying tiling of $\lambda=(9^2)$.
    $$\scalebox{0.5}{
\begin{tikzpicture}[node distance=0 cm,outer sep = 0pt]
        \node[bver] (1) at ( 0,   0) {\bf \Large 1};
        \node[bver] (2) [right = of 1] {\bf \Large 2};
        \node[bhor] (3) at (2.5, 0.5) {\bf \Large 3};         
        \node[bhor] (4) [below = of 3] {\bf \Large 4};        
        \node[bhor] (5) [right = of 3] {\bf \Large 5}; 
        \node[bhor] (6) [below = of 5] {\bf \Large 6};
        \node[bver] (7) at (6, 0) {\bf \Large 7};
        \node[bhor] (8) at (7.5, 0.5) {\bf \Large 8};
        \node[bhor] (9) [below = of 8] {\bf \Large 9};
\end{tikzpicture}  } \qquad
\scalebox{0.5}{
\begin{tikzpicture}[node distance=0 cm,outer sep = 0pt]
        \node[bver] (1) at ( 0,   0) {\bf \Large 1};
        \node[bver] (2) [right = of 1] {\bf \Large 2};
        \node[bhor] (3) at (2.5, 0.5) {\bf \Large 3};         
        \node[bhor] (4) [right = of 3] {\bf \Large 4};        
        \node[bhor] (5) [below = of 3] {\bf \Large 5}; 
        \node[bhor] (6) [below = of 4] {\bf \Large 6};
        \node[bver] (7) at (6, 0) {\bf \Large 7};
        \node[bhor] (8) at (7.5, 0.5) {\bf \Large 8};
        \node[bhor] (9) [below = of 8] {\bf \Large 9};
\end{tikzpicture}  }
$$
\end{example}

Such variation arises from a single kind of configuration in the tiling.

\begin{lemma}
    A given domino tiling of shape $(n^2)$ yields more than one domino tableaux if and only if the tiling has a $k$-stack with $k\geq 2$.
\end{lemma}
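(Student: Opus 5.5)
Any domino tiling of the $2\times n$ rectangle decomposes, from left to right, into a sequence of blocks. Each block is either a single vertical domino or a stack, i.e.\ a top horizontal domino together with the bottom horizontal domino aligned below it. Grouping maximal runs of consecutive stacks, the tiling becomes a word in vertical dominoes and $k$-stacks.

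The plan is to describe exactly which labelings of this word are valid domino tableaux. We compare dominoes across the two rows and along columns using the row and column conditions in the definition. Every cell to the left of a vertical domino $V$ lies in a domino that shares a row with $V$ and sits to its left, so that label must be smaller than the label of $V$. Every cell to the right of $V$ lies in a domino whose label must exceed that of $V$. Hence a vertical domino separates the tableau: every label to its left is smaller than every label to its right. The labels on each segment between consecutive vertical dominoes therefore form a consecutive interval, and the label of each vertical domino is forced.

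Now consider a maximal run of $k$ stacks, with tops $T_1,\dots,T_k$ and bottoms $B_1,\dots,B_k$ from left to right. The constraints are exactly
\[
T_1<\cdots<T_k, \qquad B_1<\cdots<B_k, \qquad T_i<B_i,
\]
coming from rows and columns respectively. These labelings are precisely the standard Young tableaux of shape $2\times k$. Their number is the Catalan number $C_k$, and we do not need the exact count, only whether it exceeds $1$.

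The number of domino tableaux with the given tiling is the product of $C_k$ over the maximal stack runs, since the segments receive forced consecutive label intervals and are labeled independently. We have $C_1=1$, while $C_k\ge 2$ for $k\ge 2$. Concretely, for $k\ge2$ the labelings $T_i=i,\ B_i=k+i$ and $T_1=1,\ B_1=2,\ \ldots$ (alternating) are distinct, as Example~\ref{F} illustrates. So the product exceeds $1$ iff some $k$-stack has $k\ge 2$, which proves the lemma.

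The main obstacle is the separation claim for vertical dominoes. It must be checked carefully that every domino to the left of $V$ shares a row with $V$. This holds because each stack occupies both rows, so each of its dominoes is in row 1 or row 2, both of which $V$ spans.
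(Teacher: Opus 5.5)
Your proof is correct and follows the same route as the paper's: the label of each vertical domino (and of each $1$-stack) is forced by the number of dominoes to its left, while a run of $k\ge 2$ consecutive stacks admits more than one labeling. You make explicit what the paper leaves terse, namely that vertical dominoes separate the labels, that the segments between them are labeled independently, and that each run contributes $C_k$ labelings; the paper records that last count separately in Lemma~\ref{Cat} and uses the product structure in Theorem~\ref{CatForm}.
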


\begin{proof}
    The label of any vertical tile in a tiling of $(n^2)$ is determined by how many tiles, of either orientation, appear to its left. The labels of any $1$-stack tile are similarly forced. However, in a $k$-stack for $k \ge 2$, there is more than one increasing labeling, as demonstrated in Example~\ref{F}.
\end{proof}

Furthermore, we know exactly how many labelings there are for a stacking.

\begin{lemma} \label{Cat}
The number of labelings of a $k$-stack is $C_{k}$, the $k$th Catalan number.  
\end{lemma}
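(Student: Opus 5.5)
A $k$-stack occupies a $2\times 2k$ block: $k$ top horizontal dominoes $T_1,\dots,T_k$ in row $1$ and $k$ bottom horizontal dominoes $B_1,\dots,B_k$ directly below them in row $2$. By the argument in the preceding lemma, the labels used inside the stack form a fixed set of $2k$ consecutive integers. The dominoes to the left of the stack use exactly the smaller labels and those to the right use the larger ones, so only the relative order within the block matters. I will therefore identify labelings of the $k$-stack with labelings of its $2k$ dominoes by $[2k]$ satisfying the domino tableau conditions restricted to the block.

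Those conditions are: labels increase along row $1$, so $T_1<\dots<T_k$; labels increase along row $2$, so $B_1<\dots<B_k$; and labels increase down each column, so $T_i<B_i$ for every $i$. I also need to check that no other constraints arise, either within the block or with neighbors. Each $T_i$ lies directly above $B_i$ and nothing else. The neighbors outside the block carry all smaller or all larger labels, so those comparisons hold automatically.

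Next, collapse each horizontal domino to a single cell. This gives a bijection between labelings of the $k$-stack and fillings of a $2\times k$ Young diagram by $[2k]$ that increase along rows and down columns, that is, standard Young tableaux of shape $(k,k)$. The number of these is $C_k$, either by the hook length formula,
\[
\frac{(2k)!}{\prod h} = \frac{(2k)!}{(k+1)!\,k!} = C_k,
\]
or by the classical bijection with Dyck paths: read $1,\dots,2k$ in order and write an up-step when the label is in row $1$ and a down-step when it is in row $2$.

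The main point needing care is the first step: that the label set of the stack is a fixed interval and that no constraint couples the stack's internal order to the rest of the tableau. This follows because every domino to the left of the block lies weakly left of both rows of the block. Its label is therefore smaller than those of $T_1$ or $B_1$, which are the minimal candidates in their rows. Symmetrically, every domino to the right of the block has a larger label than anything in the block. The rest of the argument is the standard count.
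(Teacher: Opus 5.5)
Your proof is correct and is essentially the paper's argument: collapse each horizontal domino to a single cell, so that labelings of a $k$-stack become standard Young tableaux of shape $(k,k)$, which are counted by $C_k$. Your extra check that the stack's labels form a fixed interval is something the paper leaves implicit, and the claim is right, but your last paragraph should invoke the maximality of the stack. A bottom-row domino to the left of the block is a priori compared only with $B_1$, not $T_1$. What forces it below $T_1$ is the vertical domino bounding a maximal stack, exactly as in the preceding lemma's argument.
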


\begin{proof}
The set of domino tableaux of shape $((2k)^2)$ composed of only horizontal dominoes (i.e., the tiling is a single $k$-stack) is in bijection with the set of standard Young tableaux of shape $(k^2)$. To see that, one identifies the $1\times 2$ horizontal dominoes with the $1\times 1$ blocks of a standard Young tableau. 
Finally, the number of standard Young tableaux of shape $(k^2)$ is the $k$th Catalan number. 
\end{proof}

Perhaps one of the more interesting properties of $\DT(n^2)$ is its determination by bottom horizontal domino labels. That is, given the labels of the bottom horizontal dominoes for some domino tableau of shape $(\nn)$, the remaining dominoes must all intersect the first row. Therefore, their labels are exactly the remaining numbers and must be placed in increasing order from left to right. Thus, we have the following key result.

\begin{lemma}\label{U}
    A domino tableau of shape $\lambda=(\nn)$ is uniquely determined by the labeling of its bottom horizontal dominoes.
\end{lemma}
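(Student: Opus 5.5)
The plan is to show that two domino tableaux $D, D' \in \DT(\nn)$ whose bottom horizontal dominoes carry the same labels in the same positions must coincide. Equivalently, from this bottom data we will reconstruct both the tiling and the labeling of the rest of $D$.

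\emph{Step 1: the tiling.} In shape $(\nn)$, every top horizontal domino sits directly above a bottom horizontal domino, forming a stack. Every other domino is vertical. So once we know which cells of row $2$ are covered by bottom horizontal dominoes, the whole tiling is forced. Those cells receive a top horizontal domino above them, and every remaining column pair is filled by a vertical domino.

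\emph{Step 2: the labels.} The dominoes not yet labeled are exactly the vertical dominoes and the top horizontal dominoes. Each of these meets row $1$. Their labels form the set $L = [n] \setminus B$, where $B$ is the set of labels on the bottom horizontal dominoes. Reading row $1$ from left to right meets each such domino exactly once. Strict increase along row $1$ therefore forces these dominoes to carry the elements of $L$ in increasing order, left to right. So the labeling is determined as well, which gives uniqueness.

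Two small points need checking.
\begin{itemize}
\item Row $1$ passes through every non-bottom domino exactly once. This holds because vertical dominoes occupy rows $1$ and $2$, and top horizontal dominoes lie in row $1$.
\item We only need injectivity, not existence for arbitrary bottom data. The statement says ``uniquely determined,'' so we do not have to show that every choice of bottom labels is realized.
\end{itemize}
There is no real obstacle here. The only care required is in Step 1: the positions of the bottom horizontal dominoes, and not only their label set, are part of the data of ``the labeling of its bottom horizontal dominoes.''
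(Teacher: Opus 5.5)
Your Step 2 is the paper's argument: the dominoes that are not bottom horizontal all meet row~1, so they carry $[n]\setminus B$ in increasing order from left to right. The gap is in Step 1. There you settle the substantive part by declaring that the \emph{positions} of the bottom horizontal dominoes are part of the data. The paper intends the opposite, and needs it:
\begin{itemize}
\item the remark right after the lemma says the underlying tiling is determined by this data;
\item the example with $n=5$ specifies only the bottom labels $2$ and $5$;
\item the inverse of $\Phi$ in Theorem~\ref{BIJ} produces only the \emph{set} $H$ of bottom labels and then invokes this lemma.
\end{itemize}
The paper's own sketch is brief on this point, but it does assert the stronger claim. If you know only the set $B$, Step 2 gives the left-to-right order of the labels in row~1. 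It does not say which of them are vertical (width $1$) and which are top horizontal (width $2$). So neither the tiling nor the location of the labels in $B$ is pinned down. Your version of the lemma is therefore too weak to give injectivity of $\Phi$.

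What is missing is an argument that $B$ alone forces the orientation of each $i\notin B$. One way is downward induction on $i$. Let $\beta_i=\#\{x\in B : x>i\}$, and let $t_i$ and $v_i$ be the numbers of top horizontal and vertical labels exceeding $i$. The claim is that $i\notin B$ is vertical if and only if $t_i=\beta_i$.
\begin{itemize}
\item By row strictness, the dominoes with labels at most $i$ cover the first $p$ cells of row~1 and the first $q$ cells of row~2, for some $p,q$. Domino $i$ is the rightmost such domino in each row it meets.
\item If $i$ is vertical, this gives $p=q$.
\item If $i$ is top horizontal, the domino beneath it has a larger label by column strictness, so $q\le p-2$.
\item Counting uncovered cells gives $n-p=v_i+2t_i$ and $n-q=v_i+2\beta_i$. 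Hence $p=q$ exactly when $t_i=\beta_i$, which proves the claim.
\end{itemize}
Since $t_i$ depends only on labels larger than $i$, scanning $i=n,n-1,\dots,1$ determines every orientation. That fixes the widths along row~1, hence the tiling. Your Step 2, together with row strictness placing $B$ in increasing order on the bottom horizontal slots, then completes the proof.
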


Note that Lemma~\ref{U} says that the underlying domino tiling is determined by this data, not just the labels of the dominoes in that tiling.

\begin{example}
    Suppose $n=5$ and the bottom horizontal labels of the domino tableau $D$ are 2 and 5. Then we must have 
$$D= \begin{matrix} \scalebox{0.5}{
\begin{tikzpicture}[node distance=0 cm,outer sep = 0pt]
        \node[bhor] (1) at ( 0,   0) {\bf \Large 1};
        \node[bhor] (2) [below = of 1] {\bf \Large 2};
        \node[bver] (3) at (1.5, -.5) {\bf \Large 3};         
        \node[bhor] (4) at (3, 0) {\bf \Large 4};        
        \node[bhor] (5) [below = of 4] {\bf \Large 5}; 
\end{tikzpicture}  } \end{matrix}.$$
\end{example}

With these observations, and the fact that a domino tableau of shape $(\nn)$ has at most $\lfloor n/2 \rfloor$ stacks, we have the tools needed to enumerate $\DT(\nn)$.

\begin{theorem} \label{BIJ}
For $n\geq 1$, $\# \DT(\nn)= \displaystyle \binom{n}{\lfloor n/2 \rfloor}$.
\end{theorem}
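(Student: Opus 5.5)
By Lemma~\ref{U}, a tableau $D\in\DT(\nn)$ is determined by the set $B\subseteq[n]$ of labels on its bottom horizontal dominoes. So the plan is to characterize exactly which subsets $B$ occur, and then count them. To $D$ I associate the word $w(D)=w_1w_2\cdots w_n\in\{U,D\}^n$ with $w_i=D$ if $i\in B$ and $w_i=U$ otherwise. I claim the image of $D\mapsto w(D)$ is exactly the set of ballot words, i.e.\ words in which every prefix contains at least as many $U$'s as $D$'s. It is classical that the number of ballot words of length $n$ is $\binom{n}{\lfloor n/2\rfloor}$; one proof uses the reflection principle, another a direct bijection with $\lfloor n/2\rfloor$-subsets. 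Together with the injectivity from Lemma~\ref{U}, this gives the theorem.

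First, necessity. For each $j$, the dominoes labeled $1,\dots,j$ fill a sub-shape of $(\nn)$ with rows of lengths $r_1\ge r_2$, since labels increase along rows and columns. Vertical dominoes contribute equally to both rows. Hence $r_1-r_2=2(t_j-b_j)$, where $t_j$ and $b_j$ count the top and bottom horizontal dominoes among the first $j$ labels. Since every non-bottom domino is a $U$, the prefix has $\#U\ge t_j\ge b_j=\#D$, so $w(D)$ is a ballot word.

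Next, sufficiency. Given a ballot word, I will construct the tableau explicitly. Match each $D$ to the nearest unmatched $U$ to its left, exactly as in matching parentheses; the ballot condition guarantees this matching succeeds. Each matched $U$ at position $i$ becomes a top horizontal domino labeled $i$, with its partner $D$ labeled $j>i$ directly beneath it. Each unmatched $U$ becomes a vertical domino. All dominoes are then placed left to right in the order of their top-row labels.

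The main obstacle is checking that this placement really is a domino tableau, and in particular that the bottom row is increasing and aligned. The key property of nearest matching is that no unmatched $U$ lies strictly between a matched pair $i<j$: such a $U$ would have been matched to $j$ or to some $D$ in between. The matched pairs are also non-crossing. It follows that the intervals $[i,j]$ form a nested family, and the vertical dominoes sit only at positions where all stacks are closed. Reading stacks in order of their top labels therefore also lists their bottom labels in increasing order, at least for stacks in the same maximal nested block. Across blocks the order is forced, because every label in an earlier block is smaller than every label in a later block. Columns increase because $i<j$ within each stack.

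Finally, this construction inverts $w$, because Lemma~\ref{U} says the tableau is recovered from $B$. Hence $w$ is a bijection from $\DT(\nn)$ onto the ballot words of length $n$, which completes the proof.
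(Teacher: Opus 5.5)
Your overall strategy is sound. The necessity argument is correct, the map $D\mapsto w(D)$ really is a bijection onto ballot words, and counting ballot words gives $\binom{n}{\lfloor n/2\rfloor}$. The sufficiency construction, however, which you yourself flag as the main obstacle, fails as written.

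\textbf{The gap.} Placing each $D$ \emph{directly beneath its parenthesis partner} produces a decreasing bottom row whenever two matched pairs nest. Take $n=4$ and the ballot word $UUDD$. Nearest matching pairs position $3$ with $2$ and position $4$ with $1$. Ordering stacks by top labels then gives top row $1,2$ and bottom row $4,3$, which is not a domino tableau. Your claim that reading stacks by top labels lists the bottom labels increasingly within a maximal nested block is exactly backwards: nesting $i<i'<j'<j$ gives bottoms $j>j'$ in that order. Parenthesis matching is last-in-first-out, whereas vertical alignment in a $2\times n$ tableau is first-in-first-out. For the same reason, the closing sentence ``this construction inverts $w$'' is unjustified, because the construction does not land in the set of domino tableaux. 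Likewise, ``columns increase because $i<j$ within each stack'' is not the right reason once the alignment is corrected.

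\textbf{The fix.} Decouple the two roles of the matching. Use it only to classify each $U$ as a vertical domino (unmatched) or a top horizontal domino (matched). You correctly showed that no unmatched $U$ lies strictly inside a matched pair. Hence the labels between consecutive unmatched $U$'s form an interval whose $U/D$ word is balanced, with every prefix having at least as many $U$'s as $D$'s.

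Within each such block, place the matched $U$'s in increasing order along row $1$ and the $D$'s in increasing order along row $2$, so the $k$th top of the block sits above the $k$th bottom. Column strictness then follows from the ballot condition inside the block, since the $k$th $U$ of the block precedes its $k$th $D$.

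Equivalently, grow the shape by reading the word left to right:
\begin{itemize}
\item an unmatched $U$ appends a vertical domino, which is legal because no matched pair straddles it, so the two rows have equal length there;
\item a matched $U$ appends a horizontal domino to row $1$;
\item a $D$ appends a horizontal domino to row $2$, which is legal because the number of matched $U$'s read so far is at least the number of $D$'s read so far.
\end{itemize}

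\textbf{Comparison with the paper.} With this repair your argument is a valid alternative to the paper's proof. It characterizes exactly which bottom-label sets occur (ballot words) and outsources the count to the classical ballot enumeration. The paper's $\Phi$ instead goes directly to $\lfloor n/2\rfloor$-subsets, in effect composing your encoding with a ballot-to-subset bijection.
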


\begin{proof}
We will construct a bijection 
$$\Phi : \DT(\nn)\rightarrow \{ \lfloor n/2 \rfloor\text{-element subsets of } [n]\}.$$
This will prove the result because the latter set has cardinality $\binom{n}{\lfloor n/2 \rfloor}$. 

Starting with an element in $\DT(\nn)$, record the labels of the bottom horizontal dominoes. To this list, add the numbers corresponding to the smallest labels of vertical dominoes, until we have constructed an $\lfloor n/2 \rfloor$-element subset. By Lemma~\ref{U}, the information regarding the bottom horizontal dominoes is enough to uniquely determine the domino tableau, and adding vertical domino labels introduces no new information about the tableau. 

Now consider an $\lfloor n/2 \rfloor$-element subset $S$ of $[n]$. We outline an algorithm that identifies an element of $S$ as the label of a vertical domino or a bottom horizontal domino. Note that the label of a vertical domino accounts for one domino in the domino tableau, and the label of a bottom horizontal domino accounts for two dominoes, since we know it is part of a stack. Start by looking at the smallest value $m \in S$. If $m=1$, then the first domino in the corresponding domino tableau is vertical and labeled $1$. If $m \neq 1$, then $m$ is the label of a bottom horizontal domino somewhere in the domino tableau. Work through the remaining elements of $S$ in increasing order to determine whether they are the label of a vertical or bottom horizontal domino, as follows. 
Given $s \in S$, suppose we have already partitioned $\{j \in S \mid j < s\}$ into a set $H_s$ of bottom horizontal domino labels and $V_s$ of vertical labels. Appropriately positioning the dominoes with those labels requires $2|H_s| + |V_s|$ dominoes (an extra to sit atop each bottom horizontal domino). Set $x_s := s - (2|H_s| + |V_s|)$. If $x_s = 1$, then $s$ is the label of a vertical domino in the tableau, and if $x_s > 1$ then $s$ is the label of a bottom horizontal domino. This procedure determines the labels of all bottom horizontal dominoes, which uniquely determines the domino tableau by Lemma~\ref{U}.
\end{proof}

We illustrate the bijection of Theorem~\ref{BIJ} with the following example.

\begin{example} 
For $n = 8$, we have 
        $$
        \begin{matrix}  \scalebox{0.5}{
\begin{tikzpicture}[node distance=0 cm,outer sep = 0pt]
        \node[bhor] (1) at ( 0,   0) {\bf \Large 1};
        \node[bhor] (2) [below = of 1] {\bf \Large 2};
        \node[bver] (3) at (1.5, -0.5) {\bf \Large 3};         
        \node[bhor] (4) at (3, 0) {\bf \Large 4};        
        \node[bhor] (5) [below = of 4] {\bf \Large 5}; 
        \node[bver] (6) at (4.5, -0.5) {\bf \Large 6};
        \node[bver] (7) [right = of 6] {\bf \Large 7};
        \node[bver] (8) [right = of 7] {\bf \Large 8};
\end{tikzpicture}  }\end{matrix} \longleftrightarrow \{ 2, 3, 5, 6\}.$$
The subset $\{2,5\}$ uniquely determines the domino tableau, but it has fewer than $\lfloor 8/2 \rfloor = 4$ elements. Therefore, we need to add two more elements to the set, from the labels of vertical dominoes. The two smallest labels of vertical dominoes are $3$ and $6$. Hence, the full set corresponding to the domino tableau is $\{ 2, 3, 5, 6\}$. 

In the other direction, start by considering $2 \in \{2,3,5,6\} =:S$. Because $2>1$, we have that $2$ is the label of a bottom horizontal domino in the tableau. For $3 \in S$, we compute $3-(2\cdot 1+0)=1$ and conclude that $3$ is the label of a vertical domino. We have now accounted for one vertical domino and one bottom horizontal domino, so we interpret $5 \in S$ by computing $5-(2\cdot 1+1)=2$, and hence $5$ is the label of a bottom horizontal domino. Finally, we compute $6-(2\cdot 2+1)=1$ to determine that $6$ is the label of a vertical domino. Hence, this subset corresponds to a domino tableau with bottom horizontal domino labels $2$ and $5$, which uniquely yields the domino tableau shown above.
\end{example}

\section{New identities involving Fibonacci and Catalan numbers}\label{cat identity} 

In this section, we use previous results to give new identities involving Fibonacci and Catalan numbers. This is possible because domino tilings of a $2\times n$ rectangle are counted by Fibonacci numbers, and Lemma~\ref{Cat} suggests that Catalan numbers are related to the enumeration of $\DT(\nn)$, which was computed explicitly in Theorem~\ref{BIJ}.
 
When considering the recursion for the Fibonacci numbers in terms of domino tilings, $F_n$ is obtained from all tilings associated with $F_{n-1}$ by adding a vertical domino on the end, and all tilings associated with $F_{n-2}$ by adding a horizontal stack of dominoes to the end. When considering domino tableaux, we get the ones for $(\nn)$ from domino tableaux of shape $((n-1)^2)$ by adding a vertical domino labeled $n$ to the end, from domino tableaux of shape $((n-2)^2)$ by adding a horizontal stack of dominoes labeled $n-1$ and $n$, as well as additional tableaux that arise when adding a horizontal stack to $((n-2)^2)$ creates a $k$-stack with $k\geq 2$. Lemma~\ref{Cat} dictates how many labelings we obtain from this new $k$-stack.

This reasoning yields a new way of counting domino tableaux of shape $(\nn)$, which we introduce through a small example.

\begin{example}
    The shape $(5^2)$ has $F_5 = 8$ domino tilings and $\binom{5}{\lfloor 5/2 \rfloor} = 10$ domino tableaux.
    Grouping the tilings by the multisets of their stackings' widths yields four groups.

\smallskip   
\noindent \begin{tabularx}{\textwidth}{c X}
\text{Group 1: } & \scalebox{0.5}{
\parbox[c]{\hsize}{\begin{tikzpicture}[node distance=0 cm,outer sep = 0pt]
        \node[bver] (1) at ( 0,   0) {};
        \node[bver] (2) [right = of 1]{};
        \node[bver] (3) [right = of 2]{};       
        \node[bver] (4) [right = of 3]{};       
        \node[bver] (5) [right = of 4]{};
\end{tikzpicture}  } } \\[.25in]
\text{Group 2: } & 
\parbox[c]{\hsize}{{\color{black} \scalebox{0.5}{
 \begin{tikzpicture}[node distance=0 cm,outer sep = 0pt]
        \node[bhor] (1) at ( 0,   0) {};
        \node[bhor] (2) [below = of 1]{} ;
        \node[bver] (3) at (1.5, -0.5){};         
        \node[bver] (4) [right = of 3] {};        
        \node[bver] (5) [right = of 4]{} ; 
\end{tikzpicture} }   \qquad
\scalebox{0.5}{
\begin{tikzpicture}[node distance=0 cm,outer sep = 0pt]
        \node[bver] (1) at ( 0,   0) {};
        \node[bhor] (2) at (1.5, 0.5){} ;
        \node[bhor] (3) [below = of 2]{} ;         
        \node[bver] (4) at (3, 0) {};        
        \node[bver] (5) [right = of 4] {}; 
\end{tikzpicture}  } \qquad
\scalebox{0.5}{
\begin{tikzpicture}[node distance=0 cm,outer sep = 0pt]
        \node[bver] (1) at ( 0,   0) {};
        \node[bver] (2) [right = of 1] {};
        \node[bhor] (3) at (2.5, 0.5) {};         
        \node[bhor] (4) [below = of 3]{} ;        
        \node[bver] (5) at (4, 0) {}; 
\end{tikzpicture}  } \qquad
\scalebox{0.5}{
\begin{tikzpicture}[node distance=0 cm,outer sep = 0pt]
        \node[bver] (1) at ( 0,   0) {};
        \node[bver] (2) [right = of 1] {};
        \node[bver] (3) [right = of 2] {};         
        \node[bhor] (4) at (3.5, 0.5) {};        
        \node[bhor] (5) [below = of 4]{}; 
\end{tikzpicture}  } } }\\[.25in]
\text{Group 3: } & 
\parbox[c]{\hsize}{{\color{black}\scalebox{0.5}{
\begin{tikzpicture}[node distance=0 cm,outer sep = 0pt]
        \node[bhor] (1) at ( 0,   0) {};
        \node[bhor] (2) [below = of 1] {};
        \node[bhor] (3) [right = of 1] {};         
        \node[bhor] (4) [below = of 3] {};        
        \node[bver] (5) at (3.5, -0.5) {}; 
\end{tikzpicture}  }  \qquad
\scalebox{0.5}{
\begin{tikzpicture}[node distance=0 cm,outer sep = 0pt]
        \node[bver] (1) at ( 0,   0) {};
        \node[bhor] (2) at (1.5, 0.5) {};
        \node[bhor] (3) [right = of 2] {};         
        \node[bhor] (4) [below = of 2] {};        
        \node[bhor] (5) [below = of 3] {}; 
\end{tikzpicture}  }}} \\[.25in]
\text{Group 4: } & 
\parbox[c]{\hsize}{\scalebox{0.5}{
\begin{tikzpicture}[node distance=0 cm,outer sep = 0pt]
        \node[bhor] (1) at ( 0,   0) {};
        \node[bhor] (2) [below = of 1] {};
        \node[bver] (3) at (1.5, -0.5) {};         
        \node[bhor] (4) at (3, 0) {};        
        \node[bhor] (5) [below = of 4] {}; 
\end{tikzpicture}  }}
\end{tabularx}
\smallskip

\noindent The Catalan number $C_n$ counts increasing YT 
of an $n$-stack when the set of labels is fixed.
Group 1 is the unique tiling of $(5^2)$ with no horizontal stacks, so there is $C_0=1$ domino tableau associated with this tiling. Group 2 has the four tilings of $(5^2)$ that have a single $1$-stack, each of which has $C_1=1$ possible labeling. Group 3 consists of the two tilings having a 2-stack. Each of these tilings has $C_2=2$ labelings. Finally the tiling in Group 4 has two $1$-stacks, and it is notably different from the others because it has two disjoint stacks. This tiling has $C_1^2=1$ corresponding labeling. We enumerate the domino tableaux of shape $(5^2)$ using these sets, multiplying each Catalan expression by the number of unique tilings associated with it and summing these products: $C_0+4C_1+2C_2+C_1^2= 1+4\cdot1+2\cdot2+1=10$. 
\end{example}

The following result generalizes this idea for counting domino tableaux of shape $(n^2)$.

\begin{theorem}\label{CatForm}
For $n\geq 1$,    
$$\#\DT(\nn)= \displaystyle \sum_{\alpha} \binom{n-2j+1}{\ell} C_\alpha,$$
where the sum runs over all compositions of $j$ with $\ell$ parts such that
$2j+\ell-1\leq n$, and where $C_\alpha=C_{\alpha_1} \cdots C_{\alpha_\ell}$. 
\end{theorem}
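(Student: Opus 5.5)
We prove the formula by grouping domino tableaux of shape $(\nn)$ according to their underlying tiling, exactly as in the example for $(5^2)$. A tiling of the $2\times n$ rectangle is a sequence of vertical dominoes and horizontal stacks. We record it by the composition $\alpha=(\alpha_1,\ldots,\alpha_\ell)$ of the widths of its maximal stackings, read left to right. Here $\alpha_i$ is the number of consecutive stacks in the $i$th stacking, $j=|\alpha|$ is the total number of stacks, and $\ell$ is the number of stackings. The empty composition ($j=\ell=0$) corresponds to the all-vertical tiling.

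The first step counts the labelings of a fixed tiling. Vertical dominoes, and the blocks of consecutive stacks, are linearly ordered left to right. The row and column conditions force every label in a block to exceed every label to its left, and to be smaller than every label to its right, since each domino in a stacking touches both rows. So the label sets of the pieces are forced to be consecutive intervals of $[n]$, in left-to-right order. Within a $k$-stack the labeling is an arbitrary standard labeling, and Lemma~\ref{Cat} gives $C_k$ of these. Hence a tiling with stacking composition $\alpha$ supports exactly $C_\alpha=C_{\alpha_1}\cdots C_{\alpha_\ell}$ domino tableaux. This extends the argument of the lemma preceding Lemma~\ref{Cat}.

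The second step counts tilings with a given composition $\alpha$. Such a tiling uses $j$ stacks, occupying $2j$ columns, and $n-2j$ vertical dominoes. The $\ell$ stackings must be placed in order among the vertical dominoes so that consecutive stackings are separated by at least one vertical domino; otherwise they would merge into a longer stacking, contradicting maximality. The $n-2j$ vertical dominoes create $n-2j+1$ gaps (including the two ends), and we choose $\ell$ distinct gaps for the stackings. This gives $\binom{n-2j+1}{\ell}$ tilings. The choice is possible precisely when $\ell\le n-2j+1$, that is, $2j+\ell-1\le n$, which matches the range of the sum.

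Summing $C_\alpha$ times the number of tilings over all $\alpha$ gives the formula, because each tableau has a unique tiling and each tiling a unique $\alpha$. The main point needing care is the first step: we must verify that the labels of distinct pieces are forced into consecutive intervals, so that the count factors as a product of Catalan numbers with no interleaving between different stackings. This holds because any domino in one piece is weakly left of, and in the same row as, some domino of every later piece. We must also check that the empty composition contributes $\binom{n+1}{0}C_\varnothing=1$.
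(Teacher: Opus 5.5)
Your proof is correct and follows the same route as the paper. You group tableaux by underlying tiling, get $C_\alpha$ labelings per tiling from Lemma~\ref{Cat}, and count tilings with stacking composition $\alpha$ by choosing $\ell$ of the $n-2j+1$ gaps among the $n-2j$ vertical dominoes. Your extra check that the pieces receive consecutive label intervals, which the paper leaves implicit, is right, but the reason you give is slightly off: a horizontal domino lies in only one row. The clean justification is that consecutive stackings are always separated by a vertical domino spanning both rows, so comparing adjacent pieces and using transitivity gives the interval property.
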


\begin{proof}
    By Lemma~\ref{Cat}, given a $k$-stack, there are $C_k$-many ways to label it. Consider an arbitrary domino tiling. Multiply the corresponding Catalan numbers for each stacking in the tiling to obtain $C_\alpha=C_{\alpha_1}\cdots C_{\alpha_\ell}$, where $\alpha_i$ is the number of stacks in the $i$th stacking. 
    
    The condition $2j+\ell-1\leq n$ is because each stack counts two boxes horizontally (so $2j$ in total) and between distinct stackings there needs to be at least one vertical domino.
    
    Now we want to determine the multiplicity for each $C_\alpha$, that is, how many tilings have $C_\alpha$-many labelings.  Put another way, we want to determine how many ways we can tile the shape $2\times n$ with dominoes such that we get, from left to right, an $\alpha_1$-stack, then an $\alpha_2$-stack, etc. If $\alpha$ is a composition of $j$, then the stackings account for $2j$ horizontal dominoes. Hence, there will be $n-2j$ vertical dominoes in the tableau. There must be at least one vertical domino between two stackings. Therefore, if $\ell$ is the length of the composition $\alpha$, then we choose any $\ell$ locations from before, after, and among the $n-2j$ vertical dominoes, into which we place the stackings. There are $\binom{n-2j+1}{\ell}$ ways to do this.  
    
    The total number of domino tableaux follows from summing over all compositions of $j$.
\end{proof}

Combining Theorems~\ref{BIJ} and~\ref{CatForm} yields the following Catalan identity.

\begin{corollary}
For $n\geq 1$,
$$\displaystyle \binom{n}{\lfloor n/2 \rfloor} =  \sum_{\alpha} \binom{n-2j+1}{\ell} C_\alpha, $$
  where the sum runs over all compositions of $j$ with $\ell$ parts such that $2j+\ell-1\leq n$.
\end{corollary}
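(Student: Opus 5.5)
The corollary follows by counting $\#\DT(\nn)$ in two ways. Theorem~\ref{BIJ} gives $\#\DT(\nn)=\binom{n}{\lfloor n/2\rfloor}$, and Theorem~\ref{CatForm} gives $\#\DT(\nn)=\sum_{\alpha}\binom{n-2j+1}{\ell}C_\alpha$, with the sum over compositions $\alpha$ of $j$ into $\ell$ parts satisfying $2j+\ell-1\le n$. Equating the two right-hand sides proves the identity. Since both theorems are stated earlier and assumed, the proof is essentially this one line.

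To keep the argument self-contained, I would also check that Theorem~\ref{CatForm} counts each domino tableau exactly once. The relevant facts are these.
\begin{itemize}
\item Each tiling of $(\nn)$ determines a unique left-to-right sequence of maximal stackings $(\alpha_1,\dots,\alpha_\ell)$. This is a composition of $j$, the total number of stacks.
\item The labelings of the tiling are obtained by choosing independent labelings of the stackings. By Lemma~\ref{Cat}, the stackings contribute $C_{\alpha_1}\cdots C_{\alpha_\ell}$ labelings. All other labels are forced, as in the first lemma of Section~\ref{count dtn2}.
\end{itemize}

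The delicate point is the independence of the stackings. The labels of dominoes to the left of a stacking must all be smaller than those inside it, because vertical dominoes separate the stackings. Hence each stacking receives a fixed consecutive block of labels, and Catalan many internal orders are possible within that block.

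I would also confirm the multiplicity $\binom{n-2j+1}{\ell}$. It counts the ways to place $\ell$ ordered stackings into distinct gaps among the $n-2j$ vertical dominoes, choosing from $n-2j+1$ gaps with at most one stacking per gap. The requirement of at most one stacking per gap is exactly what makes the stackings maximal. Finally, the constraint $2j+\ell-1\le n$ is automatic whenever the binomial coefficient is nonzero, so it does not change the sum.

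No step here is a real obstacle. The only care needed is to make sure the indexing conventions of Theorem~\ref{CatForm} match. In particular, the empty composition ($j=\ell=0$) contributes $\binom{n+1}{0}C_\emptyset=1$, which accounts for the all-vertical tiling.
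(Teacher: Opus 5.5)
Your proposal is correct and takes the same approach as the paper, which proves the corollary simply by equating the two counts of $\#\DT(\nn)$ from Theorems~\ref{BIJ} and~\ref{CatForm}. Your extra checks re-verify the proof of Theorem~\ref{CatForm} and are sound: separating vertical dominoes force each stacking onto a consecutive block of labels, and placing at most one stacking per gap is what makes the stackings maximal.
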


We demonstrate this result with an example.

\begin{example} 
For $n=12$, after combining terms with the same product of Catalan numbers, we have 
    \begin{align*}
    \binom{12}{6} &= \binom{13}{0}C_0+ \binom{11}{1}C_1+\binom{9}{1}C_2+ \binom{7}{1}C_3+\binom{5}{1}C_4+\binom{3}{1}C_5+\binom{1}{1}C_6 \\
    &+\binom{9}{2}C_1C_1 +2\binom{7}{2}C_1C_2+2\binom{5}{2}C_1C_3+2\binom{3}{2}C_1C_4+ \binom{5}{2}C_2C_2+2\binom{3}{2}C_2C_3\\
    &+\binom{7}{3}C_1C_1C_1+3\binom{5}{3}C_1C_1C_2+3\binom{3}{3}C_1C_1C_3+3\binom{3}{3}C_1C_2C_2 \\
    &+\binom{5}{4}C_1C_1C_1C_1 =924.
    \end{align*}
\end{example}

Notice that in the proof of Theorem~\ref{CatForm}, the $C_\alpha$ term accounts for the number of labelings for a given tiling, and the remainder of the right-hand side counts the possible tilings with $C_\alpha$ labelings. Therefore, if we consider the right-hand side without the Catalan product, it counts the domino tilings of shape $2\times n$. Thus, we get the following Fibonacci identity.

\begin{corollary}
    Let $F_n$ be the $n$th Fibonacci number where $F_0=F_1=1$. Then 
    \begin{equation*}
        F_n= \sum_{\alpha} \binom{n-2j+1}{\ell},
    \end{equation*}
    where the sum runs over all compositions of $j$ with $\ell$ parts such that $2j+\ell-1\leq n$.
\end{corollary}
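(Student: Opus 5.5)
Our plan is to read the Fibonacci identity off the proof of Theorem~\ref{CatForm}, with the Catalan weights removed. The proof of Theorem~\ref{CatForm} partitions the domino tilings of the $2\times n$ rectangle according to the composition $\alpha$ that records the widths of their stackings from left to right. It then shows that exactly $\binom{n-2j+1}{\ell}$ tilings have stacking composition $\alpha$, where $\alpha$ is a composition of $j$ with $\ell$ parts. Giving each tiling weight $1$ instead of $C_\alpha$, the right-hand side therefore counts all domino tilings of shape $(\nn)$. It remains to show that this number equals $F_n$ under the convention $F_0=F_1=1$.

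\textbf{Step 1: the decomposition is a bijection.} We check that a tiling corresponds exactly to a choice of $\alpha$ together with a placement of its stackings.
\begin{itemize}
\item In a $2\times n$ tiling, every horizontal domino in row~1 is aligned with one in row~2. Hence the tiling is a left-to-right word in vertical dominoes and stacks.
\item The maximal runs of consecutive stacks are the stackings. They are listed in order with widths $\alpha_1,\dots,\alpha_\ell$, where $\sum_i \alpha_i=j$.
\item There are then $n-2j$ vertical dominoes. Maximality of the runs forces consecutive stackings to be separated by at least one vertical domino.
\item Conversely, choose $\ell$ of the $n-2j+1$ gaps among the vertical dominoes, counting the two ends, and insert the stackings of widths $\alpha_1,\dots,\alpha_\ell$ into these gaps in order. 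This recovers the tiling uniquely, which gives the count $\binom{n-2j+1}{\ell}$.
\item The constraint $2j+\ell-1\le n$ is implied by the requirement $\ell\le n-2j+1$. Terms violating it have a vanishing binomial coefficient in any case.
\item The empty composition, with $j=\ell=0$, contributes the all-vertical tiling.
\end{itemize}

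\textbf{Step 2: the tilings are counted by $F_n$.} Let $T_n$ be the number of domino tilings of the $2\times n$ rectangle. Then $T_1=1$ and $T_2=2$. Conditioning on whether the last column is a vertical domino or the end of a stack gives $T_n=T_{n-1}+T_{n-2}$. With $F_0=F_1=1$, this shows $T_n=F_n$ by induction.

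The only delicate point is making sure the indexing convention and the boundary terms match. The empty composition must be included, and the binomial must count gaps including both ends. As a sanity check, $n=5$ gives $1+4+2+1=8=F_5$, matching the grouping in the $(5^2)$ example.
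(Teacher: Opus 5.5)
Your proposal is correct and follows the same route as the paper: removing the Catalan weights $C_\alpha$ from the count in the proof of Theorem~\ref{CatForm} leaves the number of domino tilings of the $2\times n$ rectangle, and that number is $F_n$. The only difference is that you write out the stacking decomposition and prove the recurrence $T_n=T_{n-1}+T_{n-2}$, whereas the paper simply cites the Fibonacci count of these tilings as known.
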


\section{Cyclic sieving on $\DT(\nn)$}\label{all CSP}
\subsection{The cyclic sieving function} \label{CSP function}

The next step towards proving a CSP on the set $\DT(\nn)$ is to determine the polynomial $f(q)$ of the triple. We look at the $q$-analogue of the formula enumerating $\DT(\nn)$, and define 
\begin{equation*}
   f(q): =\displaystyle \qbin{n}{\lfloor n/2 \rfloor}_q.
\end{equation*}

Sagan defined the $q$-binomial coefficient as the generating function of the inversion set of $W_{n,k}$, the set of binary words of length $n$ with $k$ zeros~\cite{Sag-q}. Since inversions and the major index are both Mahonian statistics, we get that
\begin{equation*}
    \qbin{n}{k}_q = \displaystyle \sum_{w\in W_{n,k}} q^{\mathrm{inv}(w)} =\displaystyle \sum_{w\in W_{n,k}} q^{\maj(w)}.
\end{equation*}
We now work to show that the outer equality holds for the case where $k=\lfloor n/2 \rfloor$ and the sum runs over $\DT(\nn)$. 

\begin{theorem}\label{word}
    There is a bijection between the sets $\DT(\nn)$ and $W_{n,\lfloor n/2\rfloor}$.
\end{theorem}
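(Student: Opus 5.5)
We will build an explicit bijection $\Psi:\DT(\nn)\to W_{n,\lfloor n/2\rfloor}$ and, since the goal of this section is to identify $\sum_{D} q^{\maj(D)}$ with $\qbin{n}{\lfloor n/2\rfloor}_q$, we will choose $\Psi$ so that it preserves major index. A bijection alone follows at once from Theorem~\ref{BIJ}: composing $\Phi$ with the indicator-word map (a $\lfloor n/2\rfloor$-subset $S\subseteq[n]$ becomes the word with zeros exactly at the positions in $S$) gives a bijection onto $W_{n,\lfloor n/2\rfloor}$. The main work is therefore to choose a construction in which descents of $D$ match descents of the word.

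The construction we will use reads the northeast-most cells. For $D\in\DT(\nn)$ and each $i\in[n]$, set $w_i=0$ if the northeast-most cell of domino $i$ lies in row 1, and $w_i=1$ if it lies in row 2. Vertical and top horizontal dominoes give $0$, and bottom horizontal dominoes give $1$. By Definition~\ref{defn:descent}, $D$ has a descent at $i$ exactly when $w_iw_{i+1}=01$, so $\Des(D)=\Des(w)$ and $\maj(D)=\maj(w)$. By Lemma~\ref{U}, $D$ is determined by its set of bottom horizontal labels, which is $\{i: w_i=1\}$, so $D\mapsto w$ is injective.

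The number of ones in $w$ equals the number of stacks. This is not constant in $n$, so $w$ is not yet in $W_{n,\lfloor n/2\rfloor}$. We will fix this the same way $\Phi$ does: convert additional letters to reach exactly $\lfloor n/2\rfloor$ letters of the prescribed kind, converting the letters attached to the leftmost vertical dominoes. The word then has $\lceil n/2\rceil$ letters of one kind and $\lfloor n/2\rfloor$ of the other, and we adjust conventions so that the distinguished letter (0 or 1) is the one counted by $W_{n,k}$.

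The main obstacle is that this padding step may create or destroy descents. To handle it, we will first try encoding each vertical domino $v$ by comparison with its neighbours rather than always by $0$. Among the vertical dominoes, the first $\lfloor n/2\rfloor-h$ receive letter $1$, where $h$ is the number of stacks. We will then check case by case ($v$ adjacent to a stack, or to another vertical domino) whether $\maj$ is preserved. If $\maj$ fails to be preserved exactly, we will fall back to a weaker but sufficient plan. We will prove injectivity of the padded map directly, by inverting it with the $x_s$ algorithm from the proof of Theorem~\ref{BIJ}, so that the stated bijection holds. Equidistribution of $\maj$ will then be deferred to a separate generating-function argument: we will verify that both sides satisfy the $q$-Pascal recursion $\qbin{n}{k}_q=\qbin{n-1}{k-1}_q+q^{k}\qbin{n-1}{k}_q$, splitting on whether domino $n$ is vertical or ends a stack.
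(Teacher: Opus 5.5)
Your proposal is correct, and for this statement it is the paper's proof: the bijection is $\Phi$ from Theorem~\ref{BIJ} followed by the indicator-word map, which sends a $\lfloor n/2\rfloor$-subset $S$ to the word with zeros exactly at the positions in $S$. Your padded northeast-cell encoding is this same map after swapping the two letters; without the swap it is off, since under the paper's convention $\Des(w)=\{i \mid a_i=1>0=a_{i+1}\}$ your encoding sends descents of $D$ to $01$ factors rather than $10$ factors.

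Everything about $\maj$ goes beyond this statement. For the record, the padding can neither create nor destroy a descent:
\begin{itemize}
\item if a padded vertical label $v>1$ lies in $\Phi(D)$, then so does $v-1$;
\item a vertical label $i$ is never immediately followed by a bottom horizontal label $i+1$.
\end{itemize}
So your $q$-Pascal fallback is unnecessary. It also would not close on the shapes $(\nn)$ alone, because removing a final bottom horizontal domino leaves the shape $(n,n-2)$.
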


\begin{proof}
    Recall the bijection $\Phi$ from the proof of Theorem~\ref{BIJ}. Fix $D\in \DT(\nn)$. Construct an element of $W_{n,\lfloor n/2\rfloor}$ as follows: the $i$th letter of the word is $0$ if and only if $i \in \Phi(D)$. 
    
    Given $a_1 \cdots a_n\in W_{n,\lfloor n/2\rfloor}$, define $S = \{i \mid a_i = 0\}$, an $\lfloor n/2 \rfloor$-element subset of $[n]$. By Theorem~\ref{BIJ}, this uniquely determines a domino tableau $\Phi^{-1}(S)$.
\end{proof}

The bijection of Theorem~\ref{word} also preserves the descent statistic, which subsequently preserves the major index. To see this, we present an alternative way of determining the descents of a domino tableau using its \emph{reduced subset}. 

\begin{definition}
    Let $D$ be a domino tableau, and set $S:= \Phi(D)$. The \emph{reduced subset} of $S$ is the subset that contains only the values of bottom horizontal dominoes. 
\end{definition}

\begin{example}
    For $D= \begin{matrix}
        \scalebox{0.5}{
\begin{tikzpicture}[node distance=0 cm,outer sep = 0pt]
        \node[bver] (1) at ( 0,   0) {\bf \Large 1};
        \node[bhor] (2) at (1.5, 0.5) {\bf \Large 2};
        \node[bhor] (3) [below = of 2] {\bf \Large 3};         
        \node[bhor] (4) [right = of 2] {\bf \Large 4};        
        \node[bhor] (5) [below = of 4] {\bf \Large 5}; 
        \node[bver] (6) at (5,   0) {\bf \Large 6};
\end{tikzpicture}  }
    \end{matrix}$, we have $S = \{ 1, 3, 5\}$, with reduced subset $\{ 3, 5\}$.
\end{example}

Recalling Definition~\ref{defn:descent}, we see that in a domino tableau of shape $(\nn)$, the only candidates for descents are the labels of top horizontal dominoes. Specifically, a candidate must be one less than the label of a bottom horizontal domino, which is an element in the reduced subset. Therefore, we only need to look at the reduced subset for possible descents. 

\begin{lemma}
    Given $D\in \DT(\nn)$, let $R$ be the reduced subset of $\Phi(D)$. For any $i\in R$, we have that $i-1 \in \Des(D)$ if and only if $i-1 \notin R$. 
\end{lemma}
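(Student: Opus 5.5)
Fix $D\in\DT(\nn)$ and $i\in R$, so the domino labeled $i$ is a bottom horizontal domino. The plan is to identify exactly which domino carries the label $i-1$, and then read off from Definition~\ref{defn:descent} whether $i-1$ is a descent. Two facts about shape $(\nn)$ do the work. First, every domino other than a bottom horizontal one meets row $1$, so its northeast-most cell lies in row $1$. Second, a bottom horizontal domino has its northeast-most cell in row $2$. Hence $i-1\in\Des(D)$ exactly when the domino labeled $i-1$ meets row $1$ and the domino labeled $i$ lies in row $2$. The second condition holds by assumption, so it remains to decide whether the domino labeled $i-1$ meets row $1$.

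For the direction $i-1\notin R$, suppose the domino labeled $i-1$ is not a bottom horizontal domino (this includes the degenerate case $i=1$, which cannot actually occur, since the bottom horizontal domino sits below a top horizontal domino with a smaller label). Then that domino meets row $1$, so its northeast-most cell is in row $1$. The northeast-most cell of domino $i$ is in row $2$, which is below row $1$, so $i-1\in\Des(D)$.

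For the direction $i-1\in R$, both dominoes $i-1$ and $i$ are bottom horizontal dominoes. Their northeast-most cells both lie in row $2$, so the cell of domino $i$ is not below that of domino $i-1$, and $i-1\notin\Des(D)$. This gives both directions of the equivalence.

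The only point needing care is the claim that every non-bottom-horizontal domino meets row $1$, so that its northeast-most cell is in row $1$. In a $2\times n$ rectangle this is immediate: a vertical domino occupies both rows, and a top horizontal domino lies in row $1$. I therefore expect no real obstacle. The lemma is essentially an unpacking of Definition~\ref{defn:descent} together with the observation preceding it that only labels one less than an element of $R$ can be descents.
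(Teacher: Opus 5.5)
Your proof is correct and follows the paper's approach: both read off the descent from the row of the northeast-most cell of the domino labeled $i-1$, using that $R$ is exactly the set of bottom horizontal labels. The one difference is that the paper also argues the domino labeled $i-1$ cannot be vertical, so it must be top horizontal; you rightly skip this, since a vertical domino's northeast-most cell is also in row $1$ and Definition~\ref{defn:descent} only uses that row.
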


\begin{proof}
    By definition, if $i-1\in \Des(D)$, then $i-1$ is the label of a top horizontal domino, and therefore $i-1\notin R$.
    
    Conversely, if $i-1\notin R$, then it cannot be the value of a vertical domino. Indeed, there cannot be a bottom horizontal domino to its left labeled $i$, and there can be none to its right because there is no integer $i-1 < x < i$ to label the horizontal domino above it. 
    Therefore, if $i-1\notin R$, then $i-1$ is the value of a top horizontal domino and hence $i-1\in \Des(D)$. 
\end{proof}

We now compare this process for finding descents of $D\in\DT(\nn)$ with the descents of words in $W_{n,\lfloor n/2\rfloor}$.

\begin{definition}
    Given a word $w=a_1 \cdots a_n\in W_{n,\lfloor n/2\rfloor}$, the \emph{descent set} of $w$ is 
    $$\Des(w)= \{ i \mid a_i=1>0=a_{i+1} \}.$$
\end{definition}

One way to think about this is that if there is a $1$ to the left of a $0$, then that corresponding index is in the descent set. 
Given $D\in \DT(n^2)$ and $S=\Phi(D)$, let $w\in W_{n,\lfloor n/2\rfloor}$ be the word given by the bijection outlined in  the proof of Theorem~\ref{word}. Then the zero entries in $w$ are in the set $S$, which means that the bijection from Theorem~\ref{word} preserves the descents. Thus, we have the following.

\begin{corollary}\label{majq}
For $n\geq 1$, we have $\qbin{n}{\lfloor n/2 \rfloor}_q= \displaystyle \sum_{D\in \DT(\nn)} q^{\maj(D)}$.
\end{corollary}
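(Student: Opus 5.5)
The plan is to show that the bijection of Theorem~\ref{word} carries the descent set of a domino tableau exactly onto the descent set of the corresponding binary word. Then $\maj$ is preserved, and the identity follows from the Mahonian identity $\qbin{n}{k}_q=\sum_{w\in W_{n,k}}q^{\maj(w)}$ with $k=\lfloor n/2\rfloor$.

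Fix $D\in\DT(\nn)$, let $S=\Phi(D)$, let $R\subseteq S$ be its reduced subset, and let $w=a_1\cdots a_n$ be the word with $a_i=0$ if and only if $i\in S$. A word descent at $i$ means $i\notin S$ and $i+1\in S$. A tableau descent at $i$ means $i$ labels a top horizontal domino and $i+1$ labels a bottom horizontal domino. This is because, in shape $(\nn)$, the northeast cell of a vertical or top horizontal domino lies in row $1$, while that of a bottom horizontal domino lies in row $2$. So I will prove that $i\in\Des(D)$ if and only if $i\notin S$ and $i+1\in S$.

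For the forward direction, let $i\in\Des(D)$. Then $i+1\in R\subseteq S$. Also $i\notin S$, since $S$ consists only of bottom horizontal labels and some vertical labels, never top horizontal labels.

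For the converse, suppose $i\notin S$ and $i+1\in S$. The point needing care is that $S$ may contain padded vertical labels, namely the smallest vertical labels added to reach size $\lfloor n/2\rfloor$. I split into cases according to the domino labeled $i$.
\begin{itemize}
\item The domino labeled $i$ cannot be a bottom horizontal domino, because then $i\in S$.
\item If it is vertical, then, since $i\notin S$, it is a vertical label that was not padded in. The padded vertical labels are the smallest ones, so every vertical label larger than $i$ is also absent from $S$. Hence $i+1$ is not vertical, so $i+1\in R$ is a bottom horizontal label. But the domino labeled $i+1$ is then part of a stack whose top label is smaller than $i+1$, and that top domino must lie to the right of the vertical domino $i$, since its bottom partner does; this forces its label to exceed $i$, which is impossible.
\item If it is a top horizontal domino, then $i+1$ cannot be vertical. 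Indeed, a vertical domino $i+1$ would have to lie to the right of the stack containing $i$, because vertical dominoes occupy both rows. The bottom partner of $i$ would then need a label strictly between $i$ and $i+1$. Hence $i+1\in R$ and $i\in\Des(D)$.
\end{itemize}
This is essentially the argument of the preceding lemma, extended to account for the padded vertical labels.

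It follows that $\Des(D)=\Des(w)$, so $\maj(D)=\maj(w)$. Since $\Phi$, composed with the subset-to-word map, is a bijection $\DT(\nn)\to W_{n,\lfloor n/2\rfloor}$ by Theorem~\ref{word}, summing $q^{\maj}$ over both sides and invoking the Mahonian identity above gives the corollary. I expect the main obstacle to be the bookkeeping of padded vertical labels in the converse direction; the rest is formal.
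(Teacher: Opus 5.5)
Your proof is correct and follows the paper's route. Like the paper, you show that the bijection of Theorem~\ref{word} carries $\Des(D)$ onto $\Des(w)$ and then invoke the Mahonian identity on $W_{n,\lfloor n/2\rfloor}$; your explicit handling of the padded vertical labels makes rigorous the step the paper compresses into ``the zero entries in $w$ are in the set $S$.'' One small slip: your opening characterization of a tableau descent drops the case where $i$ is vertical, which the northeast-cell criterion alone allows, but your vertical bullet excludes it without using $S$, so the forward direction is still complete.
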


This generating function will serve as the polynomial in our cyclic sieving result.

\subsection{The cyclic sieving action}\label{CSP Action}

We now establish the cyclic action for our CSP on domino tableaux of shape $(\nn)$. Recall the cyclic shift that serves as the cyclic sieving action for $k$-element subsets of $[n]$ (Theorem~\ref{subset CSP}): 
$$\begin{array}{ccl}
 C \times X & \longrightarrow & X \\
 g^k \hspace{0.05cm} \raisebox{0.2ex}{$\scriptscriptstyle\bullet$}\  \ms & \longmapsto & \ms':= \{ (s+k) \bmod n \mid s\in \ms\}.
\end{array} 
$$
This subset action induces a cyclic action on domino tableaux using the bijection $\Phi$. 

\begin{example} \label{action ex} 
When $n=4$, the induced cyclic shift action yields one 4-cycle and one 2-cycle as seen below. Note that the red labels in the domino tableaux correspond to the elements in the image of the tableau under $\Phi$. 
$$
\begin{array}{ccccccc}
\begin{matrix} \scalebox{0.5}{
\begin{tikzpicture}[node distance=0 cm,outer sep = 0pt]
        \node[bver] (1) at ( 0,   0) {\bf \Large {\color{red}1}};
        \node[bver] (2) [right = of 1] {\bf \Large {\color{red}2}};
        \node[bver] (3) [right = of 2] {\bf \Large 3};      
        \node[bver] (4) [right = of 3] {\bf \Large 4};     
\end{tikzpicture}  } \end{matrix}
 & \longrightarrow & 
\begin{matrix} \scalebox{0.5}{
\begin{tikzpicture}[node distance=0 cm,outer sep = 0pt]
        \node[bhor] (1) at ( 0,   0) {\bf \Large 1};
        \node[bhor] (2) [below = of 1] {\bf \Large \color{red}2};
        \node[bver] (3) at (1.5, -0.5) {\bf \Large \color{red}3};     
        \node[bver] (4) [right = of 3] {\bf \Large 4};     
\end{tikzpicture}  } \end{matrix} 
& \qquad \qquad  &
\begin{matrix} \scalebox{0.5}{
\begin{tikzpicture}[node distance=0 cm,outer sep = 0pt]
        \node[bver] (1) at ( 0,   0) {\bf \Large \color{red}1};
        \node[bhor] (2) at (1.5, 0.5) {\bf \Large 2};
        \node[bhor] (3) [below = of 2] {\bf \Large \color{red}3};     
        \node[bver] (4) at (3,0) {\bf \Large 4};     
\end{tikzpicture}  } \end{matrix}  \longleftrightarrow  \begin{matrix} \scalebox{0.5}{
\begin{tikzpicture}[node distance=0 cm,outer sep = 0pt]
       \node[bhor] (1) at ( 0,   0) {\bf \Large 1};
        \node[bhor] (2) [below = of 1] {\bf \Large \color{red} 2};
        \node[bhor] (3) [right = of 1] {\bf \Large 3};      
        \node[bhor] (4) [below = of 3] {\bf \Large \color{red}4};     
\end{tikzpicture}  } \end{matrix} \\
\uparrow & & \downarrow & & & &  \\[0.1cm]
\begin{matrix} \scalebox{0.5}{
\begin{tikzpicture}[node distance=0 cm,outer sep = 0pt]
        \node[bver] (1) at ( 0,   0) {\bf \Large \color{red}1};
        \node[bver] (2) [right = of 1] {\bf \Large 2};
        \node[bhor] (3) at (2.5,0.5) {\bf \Large 3};     
        \node[bhor] (4) [below = of 3] {\bf \Large \color{red}4};     
\end{tikzpicture}  } \end{matrix}  
& \longleftarrow  &
\begin{matrix} \scalebox{0.5}{
\begin{tikzpicture}[node distance=0 cm,outer sep = 0pt]
       \node[bhor] (1) at ( 0,   0) {\bf \Large 1};
        \node[bhor] (2) [right = of 1] {\bf \Large 2};
        \node[bhor] (3) [below = of 1] {\bf \Large \color{red}3};     
        \node[bhor] (4) [below = of 2] {\bf \Large \color{red}4}; 
\end{tikzpicture}  } \end{matrix} 
& & & &  
\end{array}
$$
\end{example}

As suggested in the previous example, the orientation of the domino labeled $x$ in a tableau $D$ is related to the orientation of the domino labeled $x+1$ in the next tableau after $D$ in the cycle. We examine this more precisely below.

Fix $D \in \DT(\nn)$ and set $S := \Phi(D)$. As in the proof of Theorem~\ref{BIJ}, the set $\ms$ can be partitioned as 
$$\ms = H \sqcup V,$$
where $H$ is the set of labels of bottom horizontal dominoes in $\ms$, and $V$ is the set of labels of vertical dominoes in $\ms$. Equivalently, the set $H$ is the reduced subset of $\ms$, and $V = \ms \setminus H$. Similarly, we write
$$\ms' = H' \sqcup V'$$ where $\ms'$ is the subset of the new domino tableau obtained from the cyclic action.
Finally,  for any $r \in \mathbb{R}$ and a set $U$ of real numbers, define 
$$U_s = \{r \in U \mid r < s\}.$$

Note that $n$ can never be in $V$ because if $|H| = k$, then there are $n-2k$ vertical dominoes, and the first $\lfloor n/2 \rfloor - k$ of them would not include the last domino. Similarly, $1$ can never be in $H$ because there is no smaller-labeled domino that could sit above it. Thus if $n \in \ms$ then $n \in H$, and if $1 \in \ms$ then $1 \in V$.

We want to prove that if $s < n$ and $s \in H$, then $s+1 \in H'$. We also want to describe the circumstances under which $s \in V$ leads to $s+1 \in H'$. We begin by characterizing membership in $H$ and $V$, which follows by rephrasing Theorem~\ref{BIJ}.

\begin{lemma}\label{lem:inequality decides H/V}
    For any $s \in \ms$,
    $$2|H_s| + |V_s| \le s-1.$$
    Moreover,
    $$H = \{s \in \ms \mid 2|H_s| + |V_s| < s-1\} \text{ and } V = \{s \in \ms \mid 2|H_s| + |V_s| = s-1\}.$$
\end{lemma}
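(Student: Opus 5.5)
The plan is to read the lemma off the decoding algorithm in the proof of Theorem~\ref{BIJ}, combined with a direct look at how the dominoes with labels less than $s$ sit in the tableau $D=\Phi^{-1}(\ms)$. Fix $s\in\ms$. Recall how $\Phi$ builds $\ms$: it consists of all bottom horizontal labels $H$, together with the smallest vertical labels $V$ needed to reach $\lfloor n/2\rfloor$ elements. The inverse algorithm processes the elements of $\ms$ in increasing order and sets $x_s=s-(2|H_s|+|V_s|)$. It classifies $s$ as vertical when $x_s=1$ and as bottom horizontal when $x_s>1$. So the two set descriptions in the statement amount to the assertions that this algorithm is correct and that $x_s\ge 1$ always. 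I will therefore prove the inequality first, and then show that the dichotomy $x_s=1$ versus $x_s>1$ matches $V$ versus $H$.

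\textbf{The inequality.} Every label in $H_s$ is the bottom domino of a stack. Its top partner carries a smaller label, because labels increase down columns. So that partner is some label $<s$, and it does not lie in $\ms$, since top horizontal labels are never recorded by $\Phi$. The labels in $V_s$ are distinct vertical labels $<s$. Hence $H_s$, $V_s$, and the set of tops above $H_s$ are pairwise disjoint subsets of $\{1,\dots,s-1\}$, which gives $2|H_s|+|V_s|\le s-1$.

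\textbf{The characterization.} Suppose first that $s\in V$. The vertical domino labelled $s$ occupies a full column, so every domino with a smaller label lies entirely to its left. I will argue that all vertical dominoes to its left belong to $V_s$, because $V$ consists of the \emph{smallest} vertical labels. I will also argue that every stack to its left has both labels $<s$, with its bottom label in $H_s$. The dominoes with labels $<s$ are therefore exactly the ones counted by $2|H_s|+|V_s|$, and equality $=s-1$ holds.

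Conversely, suppose $s\in H$. Its top partner $t$ satisfies $t<s$ and $t\notin\ms$. I claim that not every label in $\{1,\dots,s-1\}$ is accounted for by $H_s$, $V_s$, and the tops of $H_s$; this gives the strict inequality. The label $t$ is already covered, as the top of $s$ itself. But $s$ is not in $H_s$, so $t$ is \emph{not} among the tops of stacks in $H_s$. It is also not in $V_s$ or $H_s$. Thus $\{1,\dots,s-1\}$ contains at least $2|H_s|+|V_s|+1$ labels, i.e.\ $2|H_s|+|V_s|<s-1$.

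Since $\ms=H\sqcup V$, these two implications yield both set equalities.

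\textbf{The main obstacle.} The delicate step is the equality case for $s\in V$. There I must rule out a vertical domino to the left of $s$ whose label is not in $\ms$. This follows from the minimality in the definition of $\Phi$: the vertical labels chosen are the smallest ones, and vertical labels increase left to right, so any vertical label smaller than $s\in V$ is also in $V$. I must also check that no top horizontal domino with label $<s$ lies to the left of $s$ while its bottom partner has label $>s$. This is impossible, since that bottom partner would sit left of the full column holding $s$ yet carry a larger label, violating row increase in row 2.
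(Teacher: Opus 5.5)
Your proof is correct. The paper gives no separate argument for this lemma. It simply calls it a rephrasing of the decoding rule in the proof of Theorem~\ref{BIJ}. There, the test $x_s=1$ versus $x_s>1$ is asserted to detect vertical versus bottom horizontal labels, supported only by the remark that $2|H_s|+|V_s|$ dominoes are ``already placed.''

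You instead verify that rule directly from the tableau:
\begin{itemize}
\item For $s\in V$, the dominoes with labels below $s$ are exactly those to the left of its column. They are counted exactly by $V_s$ (using that $V$ is an initial segment of the vertical labels), by $H_s$, and by the tops of $H_s$. This gives equality.
\item For $s\in H$, the top partner $t<s$ lies outside $S$ and is not the top of any element of $H_s$. It is therefore an extra uncounted label below $s$, which forces the strict inequality.
\end{itemize}

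The underlying counting idea is the same as in the paper. Your version makes the lemma self-contained rather than dependent on the unproved correctness of the inverse algorithm in Theorem~\ref{BIJ}; in fact, your argument is precisely what establishes that correctness.
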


Now, we characterize what happens to an element of $\ms$ when it is acted on by the generator of the cyclic group.
\begin{theorem}\label{thm: H stays in H, V rarely moves to H}
    Fix $s < n$.
    \begin{enumerate}[label = (\alph*)]
        \item If $s \in H$, then $s+1 \in H'$.
        \item If $s \in V$, then
        $$\begin{cases}
            s+1 \in H' & \text{if and only if } V_s = \emptyset \text{ and } n \not\in \ms, \text{ and}\\
            s+1 \in V' & \text{otherwise.}
        \end{cases}$$
    \end{enumerate}
\end{theorem}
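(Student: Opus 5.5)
The plan is to work directly with the characterization of Lemma~\ref{lem:inequality decides H/V}. For $s \in \ms$, define the \emph{slack} $d(s) := s-1-\bigl(2|H_s|+|V_s|\bigr) \ge 0$, so that $s \in H$ iff $d(s)>0$ and $s\in V$ iff $d(s)=0$. Define $d'$ in the same way for $\ms'$. Since $s<n$, the element $s+1$ lies in $\ms'$. The elements of $\ms'$ smaller than $s+1$ are the shifts $r+1$ of the elements $r\in\ms_s$, together with the extra element $1$ exactly when $n\in\ms$. By the remark preceding the lemma, that extra $1$ lies in $V'$. Everything therefore reduces to comparing $d'(s+1)$ with $d(s)$, which I will do by induction on $s$ through $\ms\setminus\{n\}$ in increasing order.

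\textbf{Case $n\in\ms$.} Here $1\in V'$. I will prove by induction that $r\in H \iff r+1\in H'$ for every $r\in\ms_s$. Then the elements of $\ms'$ below $s+1$ consist of $1$ plus the shifts, with their types preserved, so
$$2|H'_{s+1}|+|V'_{s+1}| = 2|H_s|+|V_s|+1.$$
Hence $d'(s+1)=s-(2|H_s|+|V_s|+1)=d(s)$. So $s+1$ has the same type as $s$, which is consistent with (a), and with (b) in its ``otherwise'' branch.

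\textbf{Case $n\notin\ms$.} Here $\ms'=\ms+1$, and I expect the types to change at exactly one place. Let $v_0=\min V$, if it exists.
\begin{itemize}[label={}]
\item \emph{Elements $s<v_0$.} These are all in $H$, and inductively their shifts are in $H'$. The counts below agree, so $d'(s+1)=d(s)+1>0$ and $s+1\in H'$.
\item \emph{The element $s=v_0$.} We have $d(v_0)=0$, so $d'(v_0+1)=1$ and $v_0+1\in H'$. This is precisely the situation $V_s=\emptyset$, $n\notin\ms$.
\item \emph{Elements $s>v_0$.} The shifted set below $s+1$ has one element, namely $v_0+1$, promoted from $V$ to $H$. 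Hence $2|H'_{s+1}|+|V'_{s+1}|=2|H_s|+|V_s|+1$, and again $d'(s+1)=d(s)$. So $H$ elements go to $H'$, and $V$ elements with $V_s\neq\emptyset$ go to $V'$.
\end{itemize}

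Combining the two cases gives both (a) and (b). The main obstacle is organizing the induction cleanly, since the count for $s$ depends on the already-determined types of all earlier shifted elements. I will state the inductive hypothesis as the explicit description
$$H'\cap\{2,\dots,n\} = (H+1)\cup\{v_0+1 \text{ if } n\notin\ms\} \quad\text{(restricted to $s<n$)},$$
and verify the one-step slack computation from it. I will also check the edge case in which $1\in\ms$ (so $v_0=1$) against the previous case for consistency.
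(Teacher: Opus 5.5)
Your proposal is correct and follows essentially the same route as the paper. Both argue by induction on $s$ using the characterization in Lemma~\ref{lem:inequality decides H/V}, comparing $2|H'_{s+1}|+|V'_{s+1}|$ with $2|H_s|+|V_s|$, and the only place a type changes is $\min V$ when $n\notin\ms$. Your organization (splitting first on whether $n\in\ms$, then on the position of $s$ relative to $v_0$) is in fact slightly more careful than the paper's: for $s\in H$ below $v_0$ with $n\notin\ms$, you correctly record that the count increases by $0$ rather than $1$, a sub-case the paper's ``either way'' computation glosses over.
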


\begin{proof}
    We prove the result by induction on $s$. For the base case, let $s<n$ be the minimal element of $\ms$; in other words, $V_s = H_s = \emptyset$.
    \begin{enumerate}[label = (\alph*)] 
        \item If $s \in H$, then necessarily $s \ge 2$. Furthermore, $H_{s+1}'=\emptyset$ if $n \not\in \ms$, and $H_{s+1}'=\{1\}$ if $n \in \ms$. Therefore
        $$2|H_{s+1}'| + |V_{s+1}'| \le 2\cdot 0 + 1 = 1 < 2 \le s.$$
        Thus, by Lemma~\ref{lem:inequality decides H/V}, we have $s+1 \in H'$.
        \item If $s \in V$, then Lemma~\ref{lem:inequality decides H/V} requires $s = 1$. Therefore $2 \in H'$ if and only if $1 \in \ms'$ (equivalently, $n \in \ms$). Otherwise, $2 \in V'$.
    \end{enumerate}

    Now consider a non-minimal element $s \in \ms$ with $s < n$, and assume that the result holds for all elements $r\in \ms$ with $r <s$.

    First suppose that $s$ is minimal in $V$ (i.e., $s \in V$ and $V_s = \emptyset$). By the inductive hypothesis, $\{r+1 \mid r \in H_s\} \subset H'$. Then $s+1\in V'$ if and only if $\{r+1 \mid r \in H_s\}=H_{s+1}'$. By Lemma~\ref{lem:inequality decides H/V}, this means that $2|H'_{s+1}| + |V'_{s+1}| = s$ if and only if $|V'_{s+1}| = 1$, which happens if and only if $1 \in V'$. Thus, when $V_s = \emptyset$, we have $s+1 \in V'$ if and only if $1 \in V'$ (i.e., if and only if $n \in \ms$).

    Now suppose that $s \in \ms$ is not the minimal element of $V$ (i.e., it is non-minimal in $V$ or it is in $H$). By the inductive hypothesis, we again have $\{r+1 \mid r \in H_s\} \subset H'$. Note that at most one $r \in V_s$ corresponds to $r+1 \in H'_{s+1}$. Moreover, there is such an $r$ (namely, the minimal element of $V$) if and only if $n \not\in \ms$. Therefore
    $$2|H_{s+1}'| + |V_{s+1}'| = \begin{cases}
        2|H_s| + (|V_s| + 1)& \text{if } n \in \ms, \text{ and}\\
        2(|H_s| + 1) + (|V_s| - 1) & \text{if } n \not\in \ms.
    \end{cases}$$
    Either way, $2|H_{s+1}'| + |V_{s+1}'| = 2|H_s| + |V_s| + 1$. Thus, using Lemma~\ref{lem:inequality decides H/V}, we see that if $s \in V$ is non-minimal then $s+1 \in V'$, and if $s \in H$ then $s+1 \in H'$.
\end{proof}

\begin{corollary}\label{cor:at most one V becomes H}
    There is at most one $s \in V$ with $s+1 \in H'$, and it exists if and only if $n \not\in \ms$. Moreover, if such an $s$ exists, then this $s$ is odd and it is the minimal value of $V$.
\end{corollary}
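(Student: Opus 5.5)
The first two claims are read off from Theorem~\ref{thm: H stays in H, V rarely moves to H}(b), so the plan is to use that theorem for existence and uniqueness and to add a small separate argument for parity.

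For uniqueness and existence, take $s \in V$. Since $n \notin V$, we always have $s<n$, so Theorem~\ref{thm: H stays in H, V rarely moves to H}(b) applies. It says $s+1 \in H'$ exactly when $V_s=\emptyset$ and $n\notin \ms$. The condition $V_s=\emptyset$ means $s=\min V$, so at most one element of $V$ can satisfy it, and if one does it is the minimal value of $V$. If $n\notin\ms$ and $V\neq\emptyset$, then $\min V$ qualifies. So existence amounts to showing that $V$ is nonempty whenever $n\notin \ms$.

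\emph{Nonemptiness.} Suppose $n\notin\ms$ and $V=\emptyset$. Then $\ms=H$, and $|H|=\lfloor n/2\rfloor$ stacks use $2\lfloor n/2\rfloor$ of the $n$ dominoes. If $n$ is even, every domino is horizontal, so the domino labeled $n$ occupies the bottom-right position and is a bottom horizontal domino. Hence $n\in H\subseteq\ms$, a contradiction. If $n$ is odd, there is exactly one vertical domino. Since $|\ms|=\lfloor n/2\rfloor$ equals the number of bottom horizontal labels, the construction of $\Phi$ adds no vertical labels. In both parities, then, we need the largest label $n$ to lie on a bottom horizontal domino. The vertical domino and the top horizontals may be last in their rows, but the bottom-right cell is either part of a vertical domino or a bottom horizontal domino, and the label $n$ must sit on one of the two corner-containing dominoes. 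I would resolve this by a direct count instead: $\Phi$ fills $\ms$ up to $\lfloor n/2\rfloor$ elements using the smallest vertical labels. So $V=\emptyset$ forces exactly $\lfloor n/2\rfloor$ stacks, and for odd $n$ exactly one vertical domino, which never enters $\ms$. The delicate point is showing $n\in H$. Here I would use Lemma~\ref{lem:inequality decides H/V}: with $V=\emptyset$, the largest element $h$ of $H$ satisfies $2(|H|-1)\le h-2$, and the bottom horizontal domino labeled $h$ is the last stack. If $h<n$, every label after $h$ is on a top horizontal or a vertical domino to its right, which is impossible after the last stack except for vertical dominoes. Those are at most one in number and are not in $\ms$, which yields the needed contradiction or forces a small case I would check by hand. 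I expect this edge case to be the main obstacle.

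\emph{Parity.} Let $s=\min V$. Every element of $\ms$ below $s$ is in $H$, and Lemma~\ref{lem:inequality decides H/V} gives $s-1=2|H_s|+|V_s|=2|H_s|$. So $s$ is odd.
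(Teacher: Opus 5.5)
Your arguments for uniqueness, for minimality, for the forward implication (if such an $s$ exists then $n\notin S$), and for parity via $s-1=2|H_s|+|V_s|=2|H_s|$ are correct. They are exactly how the statement is read off from part (b) of the preceding theorem and the lemma characterizing $H$ and $V$; the paper gives no separate argument. You are also right that the converse requires $V\neq\emptyset$ whenever $n\notin S$, and your even case is complete: there all dominoes are horizontal, so $n\in H$.

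\textbf{The gap is the odd case, and it cannot be closed.} The ``small case'' you set aside is a genuine counterexample to the statement. For odd $n$, take the tableau whose first $n-1$ columns form a single $\tfrac{n-1}{2}$-stack, labeled by $1,\dots,n-1$ in any of the $C_{(n-1)/2}$ ways, and whose last column is a vertical domino, necessarily labeled $n$. Then $|H|=\tfrac{n-1}{2}=\lfloor n/2\rfloor$, so $\Phi$ adds no vertical labels. Hence $S=H$, $V=\emptyset$, and $n\notin S$. For $n=3$, the tableau with top $1$, bottom $2$, and vertical $3$ gives $S=\{2\}$ and $S'=\{3\}=H'$, and no $s\in V$ exists. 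So your claim that in both parities the label $n$ must lie on a bottom horizontal domino is false for odd $n$. The lone vertical domino after the last stack yields no contradiction.

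The existence clause therefore holds as stated only for even $n$, where your argument proves it. In general the correct statement is that such an $s$ exists if and only if $n\notin S$ and $V\neq\emptyset$. The exceptional situation $n\notin S$, $V=\emptyset$ occurs exactly for odd $n$ with the tiling above. If the lone vertical domino is not last, the final two columns form a stack whose bottom domino is labeled $n$, so $n\in H$. The paper's reading of the corollary off the theorem overlooks this same case, because the theorem only speaks about elements $s$ already in $V$.
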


Theorem~\ref{word} showed that domino tableaux in $\DT(\nn)$ are in bijection with elements of $W_{n,\lfloor n/2 \rfloor}$, which are binary words of length $n$ having $\lfloor n/2 \rfloor$ zeros. We can think of our cyclic action in terms of how it affects the corresponding binary word associated with a domino tableau. 

\begin{example}\label{ex: words}
    The set $W_{4,2}$ decomposes into one 4-cycle and one 2-cycle. 
$$
\begin{array}{ccccccc}
{\color{red}{0011}} & \longrightarrow & 1001 
& \qquad \qquad &
{\color{red}01}01 & \longleftrightarrow & 1010 \\
\uparrow & & \downarrow & & & & \\
0110 & \longleftarrow & 1100 & & & 
\end{array}
$$
This example aligns with the orbits of domino tableaux from Example~\ref{action ex}. 
\end{example}

The equivalence classes of binary words of length $n$ with $k$ zeros under cyclic shift are known as \emph{necklaces}, and we can represent each class by its lexicographically smallest word. The unique aperiodic subword that generates this representative is known as a \emph{binary Lyndon word}. We denote by $L_{n,k}$ the binary Lyndon words of length $n$ with $k$ zeros, and set $\ell_{n,k} := |L_{n,k}|$. For instance, the words highlighted in red in Example~\ref{ex: words} are binary Lyndon words: $L_{2,1} = \{01\}$ and $L_{4,2} = \{0011\}$.  
We have the following result connecting domino tableaux in $\DT(n^2)$ with necklaces.

\begin{lemma}\label{lyndon}
    For even values of $n$, the orbits of $C$ acting on $\DT(\nn)$ are in bijection with the necklaces of binary words under the cyclic shift action on $W_{n,n/2}$.
\end{lemma}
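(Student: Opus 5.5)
The plan is to observe that, by construction, the action of $C$ on $\DT(\nn)$ is the transport of the cyclic shift action on $\lfloor n/2\rfloor$-element subsets of $[n]$ through the bijection $\Phi$ of Theorem~\ref{BIJ}. Composing with the subset-to-word map from the proof of Theorem~\ref{word} then identifies it with a cyclic rotation action on $W_{n,n/2}$. The first step is to fix conventions: write $\Psi:\DT(\nn)\to W_{n,n/2}$ for the composite, where the $i$th letter of $\Psi(D)$ is $0$ exactly when $i\in\Phi(D)$. Here $\lfloor n/2\rfloor=n/2$ because $n$ is even.

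The second step is to check that $\Psi$ is equivariant. Under $\Psi$, the generator $g$ acts on $\DT(\nn)$ by replacing $S$ with $\{(s+1)\bmod n\}$, reading residues in $[n]$. On words, this moves the letter in position $i$ to position $i+1$, and the letter in position $n$ to position $1$. So it is the cyclic rotation $a_1\cdots a_n\mapsto a_na_1\cdots a_{n-1}$. This is the inverse of the ``move the first letter to the end'' shift described before, and both generate the same cyclic group $\langle\sigma\rangle\cong\mathbb{Z}/n$. Therefore $\Psi(g^k\cdot D)=\sigma^{\pm k}(\Psi(D))$ for all $k$, with a consistent sign.

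The third step uses the general fact that an equivariant bijection between $G$-sets maps orbits bijectively to orbits. Orbits of $C$ on $\DT(\nn)$ therefore correspond bijectively to orbits of $\langle\sigma\rangle$ on $W_{n,n/2}$. Since $\sigma$ and $\sigma^{-1}$ generate the same group, these are exactly the necklaces. As a sanity check, the case $n=4$ matches Example~\ref{action ex} and Example~\ref{ex: words}: one orbit of size $4$ and one of size $2$.

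I do not expect a real obstacle here. The only delicate point is bookkeeping: the subset shift must be read with residues in $\{1,\dots,n\}$, so that $n\mapsto 1$, and the direction of rotation must be matched. I would also remark why $n$ is taken even. The argument is formally the same for odd $n$ with $\lfloor n/2\rfloor$ zeros, so the hypothesis is presumably there only because the paper later counts these orbits via $L_{n,n/2}$ and the complement symmetry of $W_{n,n/2}$.
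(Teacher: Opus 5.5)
Your argument is correct and is essentially the reasoning the paper relies on. The paper gives no separate proof of this lemma: the action on $\DT(\nn)$ is defined by transporting the subset shift through $\Phi$, and the subset--word correspondence of Theorem~\ref{word} turns that shift into rotation of words, so orbits match necklaces exactly as you say. Your direction check agrees with the paper's own Example~\ref{ex: words} ($0011\mapsto 1001$), and you are right that evenness of $n$ plays no role in the orbit correspondence itself.
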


\subsection{The CSP on $\DT(\nn)$}\label{CSP Proof}
We can now state and prove our new instance of the cyclic sieving phenomenon. 

\begin{theorem}
    Let $C=\mathbb{Z}_n$ act on $\DT(\nn)$ via the conjugation of the cyclic shift action induced by the bijection $\Phi$. 
    Then the triple $\big( \DT(\nn), C, f(q) \big)$ exhibits the cyclic sieving phenomenon where $f(q)= \displaystyle \sum_{D\in \DT(\nn)} q^{\maj(D)}$.
\end{theorem}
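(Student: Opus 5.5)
The plan is to transport the whole triple along the bijection $\Phi$ and reduce the statement to the classical subset CSP, Theorem~\ref{subset CSP}. By construction, the action of $C=\mathbb{Z}_n$ on $\DT(\nn)$ is the conjugate $g\cdot D := \Phi^{-1}\bigl(g\cdot \Phi(D)\bigr)$, where on the right $g$ is the cyclic shift $s\mapsto (s+1)\bmod n$ on $\lfloor n/2\rfloor$-element subsets of $[n]$. Theorem~\ref{BIJ} says $\Phi$ is a bijection, so $\Phi$ is an isomorphism of $C$-sets. Consequently, for every $k$,
$$\#\{D\in\DT(\nn)\mid g^k\cdot D=D\}=\#\{S\in X_n(\lfloor n/2\rfloor)\mid g^k\cdot S=S\}.$$
I will check this carefully: $g^k\cdot D=D$ holds if and only if $\Phi^{-1}(g^k\Phi(D))=D$, which holds if and only if $g^k\Phi(D)=\Phi(D)$. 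I should also verify that the induced map is a genuine group action, so that $g^k$ acting on tableaux is $\Phi^{-1}g^k\Phi$; this is immediate because conjugation respects composition.

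Next, I will identify the polynomial. Corollary~\ref{majq} gives
$$f(q)=\sum_{D\in\DT(\nn)}q^{\maj(D)}=\qbin{n}{\lfloor n/2\rfloor}_q.$$
Theorem~\ref{subset CSP}, applied with $k=\lfloor n/2\rfloor$, then states that the number of subsets fixed by $g^k$ equals $\qbin{n}{\lfloor n/2\rfloor}_q$ evaluated at $q=e^{2\pi i k/n}$. Chaining the two equalities gives~\eqref{CSP def} for $\big(\DT(\nn),C,f(q)\big)$, and the proof is complete.

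The main point needing care is the validity of Corollary~\ref{majq}, since everything else is formal. I will therefore recheck that the descent set of $D$ corresponds to the descent set of the word $w$ attached to $\Phi(D)$, so that $\maj$ is preserved, or at least equidistributed. It might happen that descents match only up to an index shift, since a descent of $D$ sits at $i-1$ for $i\in R$, while a word descent sits at the position of the $1$ preceding a $0$. If they do not match exactly, I will fall back on equidistribution: it suffices that $\sum_D q^{\maj(D)}$ equals the $q$-binomial coefficient. I would establish this by a direct generating-function argument, for instance via the decomposition in Theorem~\ref{CatForm} together with the $q$-Catalan and maj-generating functions on stackings. Only this polynomial identity matters for the CSP, because the action side is already handled by the conjugation argument above.
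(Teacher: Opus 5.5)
Your argument is correct, and it is shorter than the paper's proof. The action on $\DT(\nn)$ is by definition $D\mapsto\Phi^{-1}(g\cdot\Phi(D))$, so $\Phi$ is an isomorphism of $C$-sets and fixed-point counts transfer unchanged. Once Corollary~\ref{majq} identifies $f(q)$ with $\qbin{n}{\lfloor n/2\rfloor}_q$, the statement is exactly Theorem~\ref{subset CSP} with subset size $\lfloor n/2\rfloor$.

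The paper instead recomputes both sides by hand. It reduces to $g^k$ with $k\mid n$ and splits by the parity of $n$. It counts invariant subsets through residue classes modulo $k$ and necklaces (Lemma~\ref{lyndon}), and evaluates the $q$-binomial at roots of unity via Proposition 4.2(ii) of Reiner--Stanton--White. That route gives explicit fixed-point counts and shows that every orbit has even size when $n$ is even. However, it essentially reproves a special case of the subset CSP, including the reduction to $k\mid n$, which your citation absorbs. Your version makes clear that the only non-formal input is the equidistribution of $\maj$.

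On that input, your hedging is unnecessary, and the vague $q$-Catalan fallback can be dropped. Write $S=\Phi(D)=R\sqcup V$, with $R$ the bottom horizontal labels and $V$ the chosen smallest vertical labels. Then $\Des(D)$ equals the descent set of the word exactly, with no index shift.
\begin{itemize}
\item A tableau descent at $i$ forces $i+1\in R$ and domino $i$ top horizontal, so $i\notin S$ and $i+1\in S$. This is a word descent at $i$.
\item Conversely, suppose $i\notin S$ and $i+1\in S$, and suppose $i+1\in V$. Domino $i$ cannot be vertical, since $V$ consists of the smallest vertical labels and so $i$ would lie in $S$. It cannot be bottom horizontal, since then $i\in R\subseteq S$.
\item Nor can domino $i$ be top horizontal. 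Its bottom partner $j$ would satisfy $i<j$ (column condition) and $j<i+1$ (it lies left of the vertical domino $i+1$ in row $2$), which is impossible.
\item Hence $i+1\in R$ and $i\notin R$, which is a descent of $D$.
\end{itemize}

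Finally, rename the subset size when you apply Theorem~\ref{subset CSP}. Setting $k=\lfloor n/2\rfloor$ clashes with the exponent in $g^k$ and the root of unity $e^{2\pi i k/n}$.
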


\begin{proof}
Let $g\in C$ denote a generator of the cyclic group. 

We will show that 
\begin{equation}\label{CSPP}
       \# \{ D\in \DT(\nn) \mid g^k(D)=D\}= f(e^{2\pi ik/n}),
\end{equation}
for all possible powers of $k$ by direct computation. Without loss of generality, we may assume that $k$ divides $n$, i.e., $n=dk$. First, we compute the left-hand side via two cases, depending on the parity of $n$.

 \begin{itemize}
     \item Let $n$ be odd.  Then $n=dk$ with $d$ and $k$ both odd. 
     Fix $D \in \DT(\nn)$ and set $\ms := \Phi(D)$, with orbit size $k$. 
Since $n$ is odd, we have $|\ms|= \lfloor n/2 \rfloor = (dk-1)/2$. We can also partition $\ms$ into $d$-element subsets of the form $\{i + rk : r \in [0,d-1]\}$ for some $i \in [1,k]$.  Therefore, we can also write the size of $\ms$ as $|\ms |= ad$ for some $a$. Then $ad= (dk-1)/2$, which we can rewrite as $1=d(k-2a)$. This can only be true when $d=1$ and $k=2a+1$. Hence, $k=n$.
Thus, when $n$ is odd,
\begin{equation*}
    \# \{ D\in \DT(\nn) \mid g^k(D)=D\}=
\begin{cases}
       \hspace{6.7mm} 0 & \text{if } k<n, \text{ and}\\
       \displaystyle \binom{n}{\lfloor n/2 \rfloor} & \text{if } k=n.
\end{cases}
\end{equation*}

\item Let $n = 2kd$ be even. First, we show that all orbit sizes must be even. Suppose $\ms = \Phi(D)$ for some $D\in \DT(\nn)$ with orbit size $k$. Since $n$ is even, we have that $|\ms|= \lfloor n/2 \rfloor = n/2 =k d$. We can 
again partition $\ms$ into $2d$-element subsets of the form $\{i + rk : r \in [0,2d-1]\}$ for some $i \in [1,k]$. Therefore, we have $|\ms|=2d a$ for some $a$. Then $2d  a= k d$ and $2a=k$, so the orbit size is even.

By Lemma~\ref{lyndon}, we know that each orbit of size $i$ has a Lyndon word representative in $L_{i,i/2}$. 
Thus, the number of domino tableaux in an orbit of size $i$ is $\ell_{i,i/2}\cdot i$. To enumerate all domino tableaux such that $g^k(D)=D$, we therefore take the sum $\sum \ell_{i,i/2}\cdot i$ over all $i$ that are even divisors of $k$ (including $k$ itself). This argument also counts the binary words of length $k$ with $k/2$ zeros. 
Hence 
\begin{equation*}
    \displaystyle \sum_{i} \ell_{i,i/2}\cdot i=\binom{k}{k/2}.
\end{equation*}

This proves that, when $n$ is even, 
\begin{equation*}
    \# \{ D\in \DT(\nn) \mid g^k(D)=D\}=
\begin{cases}
       \hspace{5.5 mm} 0 & \text{if } k \text{ is odd, and} \\
       \displaystyle \binom{k}{k/2 } & \text{if } k \text{ is even.}
\end{cases}
\end{equation*}
\end{itemize}

Now we compute the right-hand side of Equation~\eqref{CSPP}. By Corollary~\ref{majq}, we know that $\sum_{D\in \DT(\nn)} q^{\maj(D)}= \qbin{n}{\lfloor n/2 \rfloor}_q$. In~\cite[Proposition 4.2(ii)]{OGCSP}, the authors give a way to evaluate the $q$-binomial coefficient at roots of unity: given $n,d\in \mathbb{Z}^+$ such that $n=dk$, 
\begin{equation*}
    \qbin{n}{\lfloor n/2 \rfloor}_{q=e^{2\pi ik/n}}= \binom{n/d}{\lfloor n/2\rfloor /d}.
\end{equation*}
Note that the binomial coefficient evaluates to zero if it has a non-integer argument. We again proceed by cases.

\begin{itemize}

\item Suppose $d=1$. Then $k=n$ and $\displaystyle \qbin{n}{\lfloor n/2 \rfloor}_{q=e^{2\pi i}} = \binom{n}{\lfloor n/2 \rfloor}$. Therefore, when $k=n$, we get back the original binomial coefficient, which aligns with the computation on the left-hand side.

\item Suppose $n$ is odd and $d\neq 1$. Then $ \qbin{n}{\lfloor n/2 \rfloor}_{q=e^{2\pi ik/n}}= \displaystyle  \binom{n/d}{\lfloor n/2\rfloor /d}$. Since $n$ is odd, we have $\lfloor n/2 \rfloor= (n-1)/2$. Therefore, $\lfloor n/2 \rfloor/d= (n-1)/(2d)$. Given that $d\neq 1$, it cannot be a divisor of both $n$ and $n-1$. Hence, $\lfloor n/2 \rfloor/d$ is not an integer, and the binomial coefficient evaluates to 0.

\item Suppose $n$ is even and $d\neq 1$. Then we can rewrite $d=n/k$ and $\lfloor n/2 \rfloor=n/2$, so we have that
$\qbin{n}{\lfloor n/2 \rfloor}_{q=e^{2\pi ik/n}}= 
\displaystyle \binom{k}{k/2}$. When $k$ is even, this binomial coefficient is equal to the left-hand side of Equation~\eqref{CSPP}. When $k$ is odd, $k/2$ is not an integer, and thus the binomial coefficient evaluates to 0. 
\end{itemize}

Hence Equation~\eqref{CSPP} holds, and the triple exhibits the cyclic sieving phenomenon. 
\end{proof}

\section{Enumerating domino tableaux of shape $(n^m)$} 
\label{count rectangles}

Next, we extend the enumeration result to domino tableaux of shape $m\times n$. For that, we start by noticing that at least  one of $m$ or $n$ must be even. Moreover, since reflecting $D \in \DT(n^m)$ across the diagonal through its upper left corner will produce $D' \in \DT(m^n)$, we have the following symmetry.

\begin{lemma}\label{lem:rectangle symmetry}
    For all positive integers $m$ and $n$, we have $\# \DT(n^m)=\# \DT(m^n)$.
\end{lemma}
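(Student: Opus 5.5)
We will construct an explicit bijection $\tau:\DT(n^m)\to\DT(m^n)$ given by transposition, i.e. reflection across the main diagonal through the upper left corner of the $m\times n$ rectangle. The first step is to check that $\tau$ is well defined on tilings. The reflection $(i,j)\mapsto(j,i)$ sends the Young diagram of $(n^m)$ (rows $1,\dots,m$, columns $1,\dots,n$) onto the Young diagram of $(m^n)$. It sends a horizontal $1\times 2$ domino occupying cells $(i,j),(i,j+1)$ to the vertical $2\times 1$ domino occupying $(j,i),(j+1,i)$, and vice versa. Hence a domino tiling of $(n^m)$ is carried to a domino tiling of $(m^n)$ with the same number of dominoes. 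We keep each domino's label.

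Next we verify the increasing conditions. A domino tableau requires labels to increase across each row and down each column, meaning that whenever two dominoes are met in that order along a row (left to right) or a column (top to bottom), the later one has the larger label. Under transposition, row $i$ of $D$ becomes column $i$ of $\tau(D)$, and left-to-right order along the row becomes top-to-bottom order along the column. Symmetrically, columns become rows. So the sequence of labels met along any row or column of $\tau(D)$ is exactly the sequence met along the corresponding column or row of $D$, and it is strictly increasing. Thus $\tau(D)\in\DT(m^n)$.

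Finally, the same construction gives a map $\tau':\DT(m^n)\to\DT(n^m)$. Reflection is an involution, so $\tau'\circ\tau$ and $\tau\circ\tau'$ are identities, and $\tau$ is a bijection. This gives $\#\DT(n^m)=\#\DT(m^n)$. If neither $m$ nor $n$ is even, both sets are empty and the equality holds trivially; it is also covered by the general argument.

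The only potential subtlety is the increasing condition when a domino spans two columns (or rows), since the row/column condition must be read consistently. Using the cellwise formulation resolves this: label each cell by the label of its domino, and require weak increase along rows and columns, with distinct dominoes giving strict increase. This condition is manifestly symmetric under transposition, so I expect no real obstacle here.
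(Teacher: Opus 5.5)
Your proposal is correct and uses the same argument as the paper, which only observes that reflecting a tableau across the diagonal through its upper-left corner sends $\DT(n^m)$ to $\DT(m^n)$. You fill in the details the paper leaves implicit: horizontal and vertical dominoes are exchanged, the row and column increasing conditions are exchanged, and the reflection is an involution.
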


To be tileable by dominoes, a rectangle must have even area. Without loss of generality, thanks to Lemma~\ref{lem:rectangle symmetry}, we can assume that $m=2k$ is even. 

\begin{theorem} For any positive integers $k$ and $n$,
\begin{equation}\label{2kn}
    \#\DT(n^{2k})= 
    \dfrac{(k \lceil n/2\rceil)! \hspace{1 mm} \cdot \hspace{1 mm} (k \lfloor n/2\rfloor)!}{ \displaystyle \prod_{i=1}^k i(i+1)\cdots \left(i+\left\lceil \frac{n}{2}\right\rceil -1\right) \cdot i(i+1)\cdots \left(i +\left\lfloor \frac{n}{2}\right\rfloor-1\right)} \cdot \binom{kn}{k \lfloor n/2 \rfloor}.
\end{equation}
\end{theorem}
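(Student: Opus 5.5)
The plan is to use the Carré–Leclerc bijection $\Gamma$ of Theorem~\ref{DTSYT}. It identifies $\DT(n^{2k})$ with pairs of increasing Young tableaux of shapes $(\mu,\nu)$, the 2-quotient of $\lambda=(n^{2k})$, whose label sets are disjoint and together form $[kn]$. Such a pair is the same as three independent choices: a subset of $[kn]$ of size $|\nu|$ (the labels in $\nu$), a standard Young tableau of shape $\mu$ (after order-preservingly relabeling its entries to $[|\mu|]$), and a standard Young tableau of shape $\nu$. So
$$\#\DT(n^{2k}) = \binom{kn}{|\nu|}\, f^{\mu} f^{\nu},$$
where $f^\rho$ denotes the number of standard Young tableaux of shape $\rho$. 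The proof then has two steps: compute the 2-quotient of a rectangle, and apply the hook length formula.

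\textbf{Step 1: the 2-quotient.} With $\ell=2k$ parts we get $L=(n+2k-1, n+2k-2,\ldots,n)$, which is $2k$ consecutive integers. It therefore contains exactly $k$ odd and $k$ even entries. Among the odd entries, reading from right to left, the $j$th smallest odd entry is (smallest odd entry) $+2(j-1)$, and $M$ replaces it by $2j-1$. Hence every odd part of $L$ minus its $M$-value equals the same constant: (smallest odd entry of $\{n,n+1\}$) $-1$. The same holds for the even entries, with constant (smallest even entry of $\{n,n+1\}$) $-0$. Halving, one of $\mu,\nu$ is the rectangle $(\lceil n/2\rceil^{k})$ and the other is $(\lfloor n/2\rfloor^{k})$, with the assignment depending on the parity of $n$. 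I would check this against the parity of $n$ in both cases. Since the final formula is symmetric under swapping the two factorials except inside the binomial, I need $\binom{kn}{|\nu|}=\binom{kn}{|\mu|}$. That holds because $|\mu|+|\nu|=kn$, so both choices give $\binom{kn}{k\lfloor n/2\rfloor}$.

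\textbf{Step 2: hook lengths.} For a $k\times c$ rectangle, the cell $(i,j)$ has hook length $(k-i)+(c-j)+1$. Reindexing rows by $i\mapsto k+1-i$, row $i$ contributes the product $i(i+1)\cdots(i+c-1)$. The hook length formula then gives
$$f^{(c^k)} = \frac{(kc)!}{\prod_{i=1}^k i(i+1)\cdots(i+c-1)}.$$
Substituting $c=\lceil n/2\rceil$ and $c=\lfloor n/2\rfloor$ and multiplying by the binomial gives exactly \eqref{2kn}.

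The main obstacle is justifying carefully that $\Gamma$ is a bijection onto all pairs of increasing tableaux with complementary label sets. The splitting into a subset times two standard tableaux needs precisely this, and I would take it from the statement of Theorem~\ref{DTSYT} together with Definition~\ref{defn:Gamma}. A secondary point is the degenerate case $n=1$, where $\lfloor n/2\rfloor=0$ and one quotient is empty. There the empty products equal $1$, and the formula should reduce to $C_k$ with $\binom{k}{0}=1$, which is consistent with Lemma~\ref{Cat} via transposition. I would verify this small case, and the case $k=1$ (which should recover Theorem~\ref{BIJ}), as sanity checks.
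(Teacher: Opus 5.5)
Your proof is correct and matches the paper's: $\Gamma$, then $\binom{kn}{k\lfloor n/2\rfloor}$ choices of label set times two flattened rectangular standard tableaux counted by the hook length formula. The only difference is that you compute the 2-quotient directly from $L$ and $M$ (which gives $\mu=(\lfloor n/2\rfloor^k)$, $\nu=(\lceil n/2\rceil^k)$ for every $n$), rather than reading it off the all-vertical tableau as the paper does; your remark that only the unordered pair of shapes matters makes this harmless.

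One slip in your $n=1$ sanity check: $(1^{2k})$ is a single column with exactly one domino tableau, and the formula correctly gives $\frac{k!\cdot 0!}{k!}\binom{k}{0}=1$, not $C_k$. Transposing gives a $1\times 2k$ row, not a $k$-stack.
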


\begin{proof}
Using the map $\Gamma$ from Theorem~\ref{DTSYT}, every $D \in \DT(n^{2k})$ has a unique corresponding pair of Young tableaux of shapes $(\mu, \nu)$ with the same labeling set as $D$. We use $\Gamma$ to determine the shapes of $\mu$ and $\nu$. For any $k$ and $n$, we can consider the domino tableau of all vertical dominoes with labels $1, \dots, n$ covering the first two rows,
$n+1, \dots, 2n$ covering the next two rows, etc. Looking at the diagonals of this tableau and how they intersect the dominoes, the dominoes in odd columns are Type I, and the dominoes in even columns are Type II. Separating the types into their respective Young tableaux, we get two rectangular shapes, specifically $(\mu, \nu  )=  (k\times \lceil n/2 \rceil, k\times \lfloor n/2\rfloor)$. Therefore, $\#\DT(n^{2k})$ is equal to the number of pairs of increasing YT of shapes $(\mu, \nu)=  (k\times \lceil n/2 \rceil, k\times \lfloor n/2\rfloor)$.

To compute the number of pairs $(\mu, \nu)$, we first determine the number of ways to distribute the labels of $D\in \DT(n^{2k})$ between $\mu$ and $\nu$. In particular, we count the possible labeling sets of $\nu$. Elements of $\DT(n^{2k})$ are labeled with the numbers in $[kn]$. Thus, there are $kn$ total numbers to choose from when labeling $\nu$, and we pick $k\cdot \lfloor n/2 \rfloor$ of them, in $\binom{kn}{k \lfloor n/2 \rfloor}$ ways. The remaining $k\lceil n/2 \rceil$ numbers in $[kn]$ serve as the labels of $\mu$. 

The number of increasing YT of shape $\lambda \vdash n$ and with a fixed labeling set of size $n$ is equal to the number of standard YT of shape $\lambda$. This is because all the entries are different and so we can map them in an order-preserving way to the set $[n]$. (This type of maneuver can be called ``flattening.'') 
Thus, the desired enumerations are given by the hook length formula and we have that
$$ f^\mu= \dfrac{(k \lceil n/2\rceil)!}{\prod h_{\mu}(i,j)} \qquad \text{and} \qquad f^\nu= \dfrac{(k \lfloor n/2\rfloor)!}{\prod h_{\nu}(i,j)}.$$

Since $\mu$ and $\nu$ both have rectangular shape, we can determine the hook lengths of the boxes in these tableaux in terms of $k$ and $n$:
$$ f^\mu= \dfrac{(k\cdot \lceil n/2\rceil)!}{\displaystyle\prod_{i=1}^k i(i+1)(i+2)\cdots (i+\lceil n/2\rceil-1)} \text{ \ and \ } f^\nu= \dfrac{(k\cdot \lfloor n/2\rfloor)!}{\displaystyle\prod_{i=1}^k i(i+1)(i+2)\cdots (i+\lfloor n/2\rfloor-1)}.
$$

Multiplying $f^\mu$ and $f^\nu$ by the binomial coefficient counting the choices of labeling sets, we get the desired equality. 
\end{proof}

We demonstrate this enumeration for the shape $6\times 5$.

\begin{example}
    To enumerate $\DT(5^6)$, we first determine the shapes of $\mu$ and $\nu$ using $\Gamma$.
\begin{eqnarray*}
\newdimen\squarewidth  \squarewidth=15pt
\begin{tikzpicture}
\node (dt) at (0,0) {
\begin{tikzpicture}
    \draw (0,0) rectangle (0.5,-1);
    \node at (0.25, -0.5) {$\mathbf{1}$};
    \draw (0.5,0) rectangle (1,-1);
    \node at (0.75, -0.5) {$\mathbf{2}$};
    \draw (1,0) rectangle (1.5,-1);
    \node at (1.25, -0.5) {$\mathbf{3}$};
    \draw (1.5,0) rectangle (2,-1);
    \node at (1.75, -0.5) {$\mathbf{4}$};
    \draw (2,0) rectangle (2.5,-1);
    \node at (2.25, -0.5) {$\mathbf{5}$};
    \draw (0,-1) rectangle (1,-2);
    \node at (0.25, -1.5) {$\mathbf{6}$};
    \draw (1,-1) rectangle (0.5,-2);
    \node at (.75, -1.5) {$\mathbf{7}$};
    \draw (1,-1) rectangle (1.5,-2);
    \node at (1.25, -1.5) {$\mathbf{8}$};
    \draw (1.5,-1) rectangle (2,-2);
    \node at (1.75, -1.5) {$\mathbf{9}$};
    \draw (2,-1) rectangle (2.5,-2);
    \node at (2.25, -1.5) {$\mathbf{10}$};
    \draw (0,-2) rectangle (1,-3);
    \node at (0.25, -2.5) {$\mathbf{11}$};
    \draw (1,-2) rectangle (0.5,-3);
    \node at (.75, -2.5) {$\mathbf{12}$};
    \draw (1,-2) rectangle (1.5,-3);
    \node at (1.25, -2.5) {$\mathbf{13}$};
    \draw (1.5,-2) rectangle (2,-3);
    \node at (1.75, -2.5) {$\mathbf{14}$};
    \draw (2,-2) rectangle (2.5,-3);
    \node at (2.25, -2.5) {$\mathbf{15}$};
    \draw[dotted, red, very thick] (0,0) -- (2.75,-2.75);
    \draw[dotted, red, very thick] (1,0) -- (3.25,-2.25);
    \draw[dotted, red, very thick] (2,0) -- (3.25,-1.25);
    \draw[dotted, red, very thick] (0,-1) -- (2.25,-3.25);
    \draw[dotted, red, very thick] (0,-2) -- (1.25,-3.25);
\end{tikzpicture}}; 
\node (diag-mu) at (5,2) {
\begin{tikzpicture}
    \draw[dotted, red,  thick] (0,0) -- (2.75,-2.75);
    \draw[dotted, red,  thick] (1,0) -- (2.75,-1.75);
    \draw[dotted, red,  thick] (2,0) -- (2.75,-0.75);
    \draw[dotted, red,  thick] (0,-1) -- (1.75,-2.75);
    \draw[dotted, red,  thick] (0,-2) -- (.75,-2.75);
    \node at (0.25, -0.25) {$\mathbf{1}$};
    \node at (1.25, -0.25) {$\mathbf{3}$};
    \node at (2.25, -0.25) {$\mathbf{5}$};
    \node at (0.25, -1.25) {$\mathbf{6}$};
    \node at (1.25, -1.25) {$\mathbf{8}$};
    \node at (2.25, -1.25) {$\mathbf{10}$};
    \node at (0.25, -2.25) {$\mathbf{11}$};
    \node at (1.25, -2.25) {$\mathbf{13}$};
    \node at (2.25, -2.25) {$\mathbf{15}$};
\end{tikzpicture}};
\node (diag-nu) at (5,-2) {
\begin{tikzpicture}
    \draw[dotted, red,  thick] (0,0) -- (1.75,-1.75);
    \draw[dotted, red,  thick] (1,0) -- (1.75,-0.75);
    \draw[dotted, red,  thick] (0,-1) -- (1.75,-2.75);
    \draw[dotted, red,  thick] (0,-2) -- (.75,-2.75);
    \node at (0.25, -0.25) {$\mathbf{2}$};
    \node at (1.25, -0.25) {$\mathbf{4}$};
    \node at (0.25, -1.25) {$\mathbf{7}$};
    \node at (1.25, -1.25) {$\mathbf{9}$};
    \node at (0.25, -2.25) {$\mathbf{12}$};
    \node at (1.25, -2.25) {$\mathbf{14}$};
\end{tikzpicture}};
\node (mu) at (8.5,2) {
\begin{tikzpicture}
    \draw (0,0) rectangle (0.5,-0.5);
    \draw (0.5,0) rectangle (1,-0.5);
    \draw (1,0) rectangle (1.5,-0.5);
    \draw (0,-0.5) rectangle (0.5,-1);
    \draw (0.5,-0.5) rectangle (1,-1);
    \draw (1,-0.5) rectangle (1.5,-1);
    \draw (0,-1) rectangle (0.5,-1.5);
    \draw (0.5,-1) rectangle (1,-1.5);
    \draw (1,-1) rectangle (1.5,-1.5);
    \node at (0.25, -0.25) {$\mathbf{1}$};
    \node at (0.75, -0.25) {$\mathbf{3}$};
    \node at (1.25, -0.25) {$\mathbf{5}$};
    \node at (0.25, -0.75) {$\mathbf{6}$};
    \node at (0.75, -0.75) {$\mathbf{8}$};
    \node at (1.25, -0.75) {$\mathbf{10}$};
    \node at (0.25, -1.25) {$\mathbf{11}$};
    \node at (0.75, -1.25) {$\mathbf{13}$};
    \node at (1.25, -1.25) {$\mathbf{15}$};
    \node at (2,-.85) {$=\mu$};
\end{tikzpicture}
};  
\node (nu) at (8.5,-2) {\begin{tikzpicture}
    \draw (0,0) rectangle (0.5,-0.5);
    \draw (0.5,0) rectangle (1,-0.5);
    \draw (0,-0.5) rectangle (0.5,-1);
    \draw (0.5,-0.5) rectangle (1,-1);
    \draw (0,-1) rectangle (0.5,-1.5);
    \draw (0.5,-1) rectangle (1,-1.5);
    \node at (0.25, -0.25) {$\mathbf{2}$};
    \node at (0.75, -0.25) {$\mathbf{4}$};
    \node at (0.25, -0.75) {$\mathbf{7}$};
    \node at (0.75, -0.75) {$\mathbf{9}$};
    \node at (0.25, -1.25) {$\mathbf{12}$};
    \node at (0.75, -1.25) {$\mathbf{14}$};
    \node at (1.5,-0.75) {$=\nu$};
\end{tikzpicture}};
\draw (dt) edge[->] node[sloped,above] {Type I} (diag-mu);
\draw (dt) edge[->] node[sloped,below] {Type II} (diag-nu);
\draw (diag-mu) edge[->]  (mu);
\draw (diag-nu) edge[->] (nu);
\end{tikzpicture}
\end{eqnarray*}
Thus, we want to count pairs of increasing Young tableaux of shapes $(3,3)$ and $(3,2)$, respectively. There are $\binom{15}{6}$ ways of distributing the labels between $\mu$ and $\nu$. Using the hook length formula, we determine the number of increasing YT  of each shape, using those labels, to be:
$$ f^\mu= \dfrac{(3\cdot 3)!}{\displaystyle\prod_{i=1}^3 i(i+1)(i+2)}= 42, \qquad \qquad  f^\nu= \dfrac{(3\cdot 2)!}{\displaystyle\prod_{i=1}^3 i(i+1)}= 5.$$
Hence, $\# \DT(5^6)= \binom{15}{6}\cdot 42\cdot 5= 1,\!051,\!050$.
\end{example}

Equation~\eqref{2kn} simplifies to the formula for $\#\DT(\nn)$ given in Theorem~\ref{CatForm}. There are also nice simplifications for $\# \DT(3^{2k})$ and $\#\DT(n^4)$ involving the Catalan numbers:
\begin{align*}
    \# \DT(3^{2k}) &= C_k\cdot \displaystyle \binom{3k}{k}, \text{ and}\\
    \#\DT(n^4) &= C_{\lceil n/2 \rceil}\cdot C_{\lfloor n/2 \rfloor} \cdot \displaystyle \binom{2n}{2\lfloor n/2 \rfloor}.
\end{align*}

\section{Open problems} \label{conclusion}

As with any work on the cyclic sieving phenomenon, there are many interesting directions for further research. Here we suggest a few that are particularly close to the results of this paper.

The bijection $\Gamma$ described in Definition~\ref{defn:Gamma} enabled us to enumerate the set of rectangular domino tableaux of a given dimension. It would be interesting to see if we could use $\Gamma$ to enumerate other shapes of domino tableaux. For example, it is straightforward to do this for domino tableaux of hook shape.

\begin{theorem} For $m,n\in \mathbb{Z}^+$ such that $m+n$ is even, we have
    \begin{equation*}
        \# \DT(n, 1^m)= \dfrac{(\lceil n/2\rceil \cdot \lfloor m/2 \rfloor)!}{(\lceil n/2\rceil + \lfloor m/2 \rfloor)\cdot (\lceil n/2 \rceil -1)!\cdot (\lfloor m/2 \rfloor)!}.
    \end{equation*}
\end{theorem}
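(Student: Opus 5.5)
The plan is to follow the proof of Equation~\eqref{2kn}: use the bijection $\Gamma$ of Theorem~\ref{DTSYT} to reduce the count to increasing Young tableaux whose shapes are the $2$-quotient of $\lambda=(n,1^m)$, and then apply the hook length formula. The new feature for a hook is that one of the two quotient shapes should be empty. Then there is no binomial factor for distributing labels, since all $(n+m)/2$ labels go into the single nonempty tableau.

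\emph{Step 1: compute the $2$-quotient.} Here $\ell=m+1$, so
$$L=(n+m,\ m-1,\ m-2,\ \ldots,\ 1,\ 0).$$
The entries $m-1,\dots,0$ are consecutive. When we replace the evens from right to left by $0,2,4,\dots$ and the odds by $1,3,5,\dots$, each of these entries is sent to itself in $M$. They therefore contribute zero parts to both $\mu$ and $\nu$. Since $m+n$ is even, the first entry $n+m$ is even, so it contributes only to $\nu$. Among $m-1,\dots,0$ there are $\lceil m/2\rceil$ even entries, so the corresponding entry of $M$ is $2\lceil m/2\rceil$. This gives a first part $\nu_1=(n+m-2\lceil m/2\rceil)/2$.

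I would then recheck the bookkeeping, because the parts of $\mu$ and $\nu$ are read as multisets of beta-numbers, so the zero parts can be discarded but the lengths must be tracked carefully. The expected outcome is that one component is empty and the other is the hook $(a,1^b)$ with $a=\lceil n/2\rceil$ and $b=\lfloor m/2\rfloor$. I would sanity-check this on small cases using the paper's conventions: $(3,1)$ gives quotient $(\emptyset,(2))$, $(2,1,1)$ gives $(\emptyset,(1,1))$, and $(1,1)$ gives a single box. All three agree with $a=\lceil n/2\rceil$, $b=\lfloor m/2\rfloor$. Alternatively, and perhaps more cleanly, I could argue directly from Definition~\ref{defn:Gamma}: in a domino tiling of a hook, every domino on an even diagonal meets that diagonal in the same way (all of Type~I or all of Type~II, depending on the parity of $n$). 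I would write out both arguments and keep whichever is cleaner.

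\emph{Step 2: count.} Since one quotient shape is empty, every label of $[(n+m)/2]$ lies in the other tableau, and flattening (as in the proof of Equation~\eqref{2kn}) identifies these with standard Young tableaux of shape $(a,1^b)$. The hook lengths are $a+b$ at the corner, $a-1,\dots,1$ along the rest of the arm, and $b,\dots,1$ down the rest of the leg. So
$$\#\DT(n,1^m)=\frac{(a+b)!}{(a+b)\,(a-1)!\,b!}=\binom{a+b-1}{b}.$$

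The main obstacle is Step~1: getting the $2$-quotient exactly right and confirming that one side is empty for both parities of $n$. A further concern is matching the stated formula. As printed, the numerator $(\lceil n/2\rceil\cdot\lfloor m/2\rfloor)!$ fails already for $(3,1)$, where it gives $0!/(2\cdot 1!\cdot 0!)=1/2$. I therefore expect the intended numerator to be $(\lceil n/2\rceil+\lfloor m/2\rfloor)!$, and I would prove the statement in that corrected form.
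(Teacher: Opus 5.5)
\textbf{Overall route and the formula.} Your route is the paper's: reduce via $\Gamma$ to a single increasing tableau of hook shape, then apply the hook length formula. The paper argues directly from the tiling. The hook has a unique tiling, and all of its dominoes are of one type (Type I when $n,m$ are odd, Type II when both are even). So every label goes into one tableau of shape $(\lceil n/2\rceil,1^{\lfloor m/2\rfloor})$. Your Step 2 is right, and so is your correction of the statement. The hook length formula for $(a,1^b)$ gives $(a+b)!/\bigl((a+b)(a-1)!\,b!\bigr)=\binom{a+b-1}{b}$, which is what the paper's proof actually produces. The printed numerator $(a\cdot b)!$ is a typo: it gives $1/2$ for $(3,1)$ and $2/3$ for $(4,1,1)$, where the true counts are $1$ and $2$.

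\textbf{The error in Step 1.} As written, Step 1 does not establish the key claim. For $2\le i\le m+1$ we have $\lambda_i=1$, so $L_i=1+(m+1)-i$ and $L=(n+m,m,m-1,\dots,1)$, not $(n+m,m-1,\dots,0)$. Your version produces a one-row $\nu$ with $(n+m)/2-\lceil m/2\rceil$ cells. That is fewer cells than there are dominoes; for $(2,1,1)$ it gives one cell instead of two. This is not a beta-number or zero-part bookkeeping issue, and no reinterpretation turns it into a hook. So the hook shape is currently asserted from three small cases rather than derived.

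\textbf{How to fix it with the $L,M$ recipe.} With the correct $L$ the computation is immediate.
\begin{itemize}
\item The odd tail entries $1,3,5,\dots$ receive $M$-values $1,3,5,\dots$, so they contribute nothing to $\mu$.
\item The even tail entries $2,4,\dots,2\lfloor m/2\rfloor$ receive $0,2,\dots$, so each contributes a part equal to $1$ to $\nu$.
\item The even entry $n+m$ receives $2\lfloor m/2\rfloor$, contributing $(n+m)/2-\lfloor m/2\rfloor=\lceil n/2\rceil$, using that $n+m$ is even.
\end{itemize}
Hence $(\mu,\nu)=\bigl(\emptyset,(\lceil n/2\rceil,1^{\lfloor m/2\rfloor})\bigr)$ for both parities, with $|\nu|=(n+m)/2$ as required.

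\textbf{How to fix it with the direct argument.} If you prefer this route, you need the forcing step the paper leaves implicit. The bottom cell $(m+1,1)$ can only be covered together with $(m,1)$, and this propagates up the first column. The leg is therefore covered by vertical dominoes, together with $(1,1)$ when $m$ is odd, and row $1$ is tiled horizontally. So the tiling is unique. Then check which cell of each domino has even content $j-i$. One caution if you write both arguments: for $n,m$ odd, the Type I dominoes form the component that the $L,M$ recipe calls $\nu$, because the paper's naming of the two components effectively depends on the parity of $\ell$. This does not affect the count.
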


\begin{proof} 
There is only one way to tile a hook with dominoes, depending on parity. In particular, we get the following tilings if both $n$ and $m$ are odd or both are even, respectively.
$$\begin{tikzpicture}[scale=.7]
    
    \draw (0,0) rectangle (1,-2);
    \draw (0,-2) rectangle (1,-4);
    \draw (1,0) rectangle (3,-1);
    \draw (3,0) rectangle (5,-1);
    \draw (5.5,-.5) node {$\cdots$};
    \draw (.5,-4.5) node {$\vdots$};
    \draw (0,-.5) node[left] {$D_{\text{odd}} =$};

    \draw (10,0) rectangle (12,-1);
    \draw (12,0) rectangle (14,-1);
    \draw (10,-1) rectangle (11,-3);
    \draw (10,-3) rectangle (11,-5);
    \draw (14.5,-.5) node {$\cdots$};
    \draw (10.5,-5.5) node {$\vdots$};
    \draw (10,-.5) node[left] {$D_{\text{even}} =$};
\end{tikzpicture}$$
Considering the diagonals of these tilings, all dominoes in $D_{\text{odd}}$ are Type I and all dominoes in $D_{\text{even}}$ are Type II. Therefore, $\# \DT(n, 1^m)$ is equal to the number of increasing YT of shape $\mu= (\lceil n/2 \rceil, 1^{\lfloor m/2 \rfloor})$. The hook length formula gives
$$\# \DT(n, 1^m)= f_{\mu}=\dfrac{(\lceil n/2\rceil \cdot \lfloor m/2 \rfloor)!}{(\lceil n/2\rceil + \lfloor m/2 \rfloor)\cdot (\lceil n/2 \rceil -1)!\cdot (\lfloor m/2 \rfloor)!}.$$
\end{proof}

The enumeration of $\DT(\lambda)$ for arbitrary $\lambda$ is certainly a problem that deserves attention, and the approach of using the hook length formula could be exploited further.

Having established $\# \DT(n^{m})$, we can work to prove a new instance of the cyclic sieving phenomenon on this set. We know there exists a CSP on the set of rectangular standard Young tableaux with promotion and the $q$-analogue of the hook length formula (Theorem~\ref{RTCSP},~\cite{RTCSP}). With this result in mind, we have the following conjecture. 

\begin{conjecture} 
The set of rectangular domino tableaux $\DT(n^{2k})$ exhibits the cyclic sieving phenomenon with function
$$f(q)= 
\dfrac{[k \lceil n/2\rceil]!_q \hspace{1 mm} \cdot \hspace{1 mm} [k \lfloor n/2\rfloor]!_q}{ \displaystyle \prod_{i=1}^k [i]_q[i+1]_q\cdots \left[i+\left\lceil \frac{n}{2}\right\rceil -1\right]_q \cdot [i]_q[i+1]_q\cdots \left[i +\left\lfloor \frac{n}{2}\right\rfloor-1\right]_q} \cdot \displaystyle\qbin{kn}{k \lfloor n/2 \rfloor}_q.
$$.
\end{conjecture}

\bibliographystyle{plain}
\bibliography{References}

\end{document}